\def\eqnarray{\stepcounter{equation}\let\@currentlabel=\theequation
\global\@eqnswtrue
\tabskip\@centering\let\\=\@eqncr
$$\halign to \displaywidth\bgroup\hfil\global\@eqcnt\z@
  $\displaystyle\tabskip\z@{##}$&\global\@eqcnt\@ne
  \hfil$\displaystyle{{}##{}}$\hfil
  &\global\@eqcnt\tw@ $\displaystyle{##}$\hfil
  \tabskip\@centering&\llap{##}\tabskip\z@\cr}
\def\endeqnarray{\@@eqncr\egroup
      \global\advance\c@equation\m@ne$$\global\@ignoretrue}
\def\@yeqncr{\@ifnextchar [{\@xeqncr}{\@xeqncr[5pt]}}
      \def\dC{{\mathbb C}}
   \def\dN{{\mathbb N}}
\def\Ci{{\mathbb C}}
\def\Ni{{\mathbb N}}
\def\Ri{{\mathbb R}}
\def\cA{{\EuScript A}}      
\def\cD{{\EuScript D}}      
   \def\cH{{\EuScript H}}   
      \def\cL{{\EuScript L}}
      \def\cO{{\EuScript O}}
\def\bm\chi{\mbox{\boldmath$\chi$}}
\def\ker{{\rm ker\,}}
\def\dom{{\rm dom\,}}
\def\Tr{{\rm Tr\,}}
\let\xker=\ker \def\ker{{\xker\,}}
\def\supp{{\rm supp\,}}
\def\senki{{\lbrack\negthinspace [\bot ]\negthinspace\rbrack}}
\def\senki+{{\lbrack\negthinspace [+] \negthinspace\rbrack}}
\newtheorem{theorem}{Theorem}[section]
\newtheorem{proposition}[theorem]{Proposition}
\newtheorem{corollary}[theorem]{Corollary}
\newtheorem{lemma}[theorem]{Lemma}
\theoremstyle{definition}
\newtheorem{remark}[theorem]{Remark}
\numberwithin{equation}{section}
\newcounter{teller}
\newenvironment{tabel}{\begin{list}%
{\rm  (\alph{teller})\hfill}{\usecounter{teller} \leftmargin=1.1cm
\labelwidth=1.1cm \labelsep=0cm \parsep=0cm}
                      }{\end{list}}
\newcounter{tellerr}
\newenvironment{tabeleq}{\begin{list}%
{\rm  (\roman{tellerr})\hfill}{\usecounter{tellerr} \leftmargin=1.1cm
\labelwidth=1.1cm \labelsep=0cm \parsep=0cm}
                         }{\end{list}}
\newcommand{\RRe}{\mathop{\rm Re}}
\DeclareMathAlphabet\gothic{U}{euf}{m}{n}
\newcommand{\gotb}{\gothic{b}}
\newcommand{\gota}{\gothic{a}}
\newcommand{\spann}{\mathop{\rm span}}
\newcommand{\divv}{\mathop{\rm div}}
\begin{document}

\thispagestyle{empty}

\vspace*{1cm}
\begin{center}
{\Large\bf Jordan chains of elliptic partial differential operators \\[2mm]
and Dirichlet-to-Neumann maps} \\[5mm]

\large J. Behrndt$^1$ and A.F.M. ter Elst$^2$

\end{center}

\vspace{5mm}

\begin{center}
{\bf Abstract}
\end{center}

\begin{list}{}{\leftmargin=1.8cm \rightmargin=1.8cm \listparindent=10mm 
   \parsep=0pt}
\item
Let $\Omega \subset \Ri^d$ be a bounded open set with Lipschitz boundary $\Gamma$. 
It will be shown that the Jordan chains of m-sectorial second-order 
elliptic partial differential operators
with measurable coefficients and (local or non-local) Robin boundary conditions in 
$L_2(\Omega)$ can be characterized with the help of Jordan chains
of the Dirichlet-to-Neumann map and the boundary operator from 
$H^{1/2}(\Gamma)$ into $H^{-1/2}(\Gamma)$. 
This result extends the Birman--Schwinger principle in the framework
of elliptic operators 
for the characterization of eigenvalues, eigenfunctions and geometric eigenspaces 
to the complete set of all generalized eigenfunctions and algebraic
eigenspaces.

\end{list}

\vspace{2cm}
\noindent
May 2019

\vspace{2mm}
\noindent
2010 Mathematics Subject Classification: 35J57, 35P05, 47A75, 47F05.

\vspace{2mm}
\noindent
Keywords: Jordan chain, eigenvector, generalized eigenvector, Robin boundary condition,
Dirichlet-to-Neumann operator

\vspace{5mm}

\noindent
{\bf Home institutions:}    \\[3mm]
\begin{tabular}{@{}cl@{\hspace{10mm}}cl}
1. & Institut f\"ur Angewandte Mathematik  & 
  2. & Department of Mathematics   \\
& Technische Universit\"at Graz   & 
  & University of Auckland   \\
& Steyrergasse 30 & 
  & Private bag 92019  \\
& A-8010 Graz & 
  & Auckland 1142 \\
& Austria  & 
  & New Zealand  \\[8mm]
& Email: behrndt@tugraz.at  &
  & Email: terelst@math.auckland.ac.nz 
\end{tabular}

\newpage

\bibliographystyle{tom}

\section{Introduction} \label{Sjordanchain1}

The Dirichlet-to-Neumann map is an important object in the analysis of elliptic partial 
differential equations 
since it can be used to describe the spectra of the associated elliptic operators. 
The principal strategy and advantage is that a spectral problem for a 
partial differential operator on a domain 
$\Omega$ is reduced to a spectral problem for an operator function on the boundary $\Gamma$ 
of this domain, where, very roughly speaking,
the Dirichlet and Neumann data can be {\it measured}.
This type of approach to problems in spectral and scattering theory for 
elliptic partial differential operators was used in the self-adjoint case in, e.g.\ 
\cite{ArM2,BMN,BehR,BehR3,GesM2,GesM3,GesMZ2,MPavlovP,Marl,MikhailovaPPRY,MPavlovP,Post}, for 
non-self-adjoint situations in, e.g.\ \cite{BGHN,BGW,Grubb1,Mal},
and we also refer the reader to the 
more abstract contributions 
\cite{AE3, AE6, AE7, AEKS, AEW,BrMN,BrownHMNW,BrownMarlettaNW,BruningGeykerP,
DemuthHK,DerkachM,DerkachM2,EO6, EO7,LanT,MalamudMog,Posilicano}.

In the present paper we are interested in a characterization of Jordan chains of eigenvalues of elliptic operators. 
To motivate our investigations let us consider here in the introduction only the
special case
of a Schr\"{o}dinger operator $\cA=-\Delta+V$ on a bounded Lipschitz domain 
$\Omega\subset\Ri^d$ with $d\geq 2$ and with a complex-valued
potential $V \in L_\infty(\Omega)$.
Later in this paper much more general second-order partial differential expressions $\cA$ with 
measurable coefficients will be considered; see Section~\ref{Sjordanchain2} for details.
The Dirichlet-to-Neumann map $D(\lambda)$ corresponding to $-\Delta+V$ can be defined as 
a bounded operator $D(\lambda)\colon H^{1/2}(\Gamma)\rightarrow H^{-1/2}(\Gamma)$ by
\begin{equation*}
 \Tr f_\lambda \mapsto \gamma_N f_\lambda,
\end{equation*}
where $f_\lambda\in H^1(\Omega)$ is such that $\cA f_\lambda=\lambda f_\lambda$.
Here $\Tr f_\lambda\in H^{1/2}(\Gamma)$ and $\gamma_N f_\lambda\in H^{-1/2}(\Gamma)$
denote the Dirichlet and Neumann trace of $f_\lambda$, respectively, 
and $\lambda\in\dC$ is not an eigenvalue of the Dirichlet
realization $A_D$ of $-\Delta + V$.
Assume for simplicity that $B\colon L_2(\Gamma)\rightarrow L_2(\Gamma)$ is a bounded 
operator and consider the (non-local) Robin realization of $-\Delta+V$ defined 
by
\begin{equation}\label{abintro}
A_B f= -\Delta f+Vf,\quad 
\dom A_B=\bigl\{f\in H^1(\Omega): 
        \gamma_N f=B\Tr f \mbox{ and } -\Delta f+Vf\in L_2(\Omega)\bigr\}. 
\end{equation}
Note that the resolvents of $A_D$ and $A_B$ are both compact operators in $L_2(\Omega)$ 
due to the compactness of the embedding $H^1(\Omega)\hookrightarrow L_2(\Omega)$
and hence the spectra of $A_D$ and $A_B$ are discrete.
It is well-known and easy to see that for all $\lambda_0\not\in\sigma_p(A_D)$ 
one has $\lambda_0\in\sigma_p(A_B)$ if and only if $\ker (D(\lambda_0)-B)\not= \{0\}$.
Sometimes this is referred to as a variant of the Birman--Schwinger principle.
In fact, if $\lambda_0\in\sigma_p(A_B)$ and $f_0\in\dom A_B$ is a 
corresponding eigenfunction, then $\Tr f_0\not=0 $ (as otherwise $f_0$
would be an eigenfunction for $A_D$ at $\lambda_0$) and 
\begin{equation*}
 (D(\lambda_0)-B)\Tr f_0=D(\lambda_0)\Tr f_0 - B \Tr f_0=\gamma_N f_0- B \Tr f_0=0,
\end{equation*}
and conversely, if $\varphi\in\ker(D(\lambda_0)-B) \setminus \{ 0 \} $,
then the unique solution $f_0\in H^1(\Omega)$ of the boundary value problem 
$(-\Delta +V)f_0=\lambda_0 f_0$ with $\Tr f_0=\varphi$, satisfies $\gamma_N f_0- B \Tr f_0=0$, so that
$f_0\in\dom A_B$ is an eigenfunction of $A_B$ corresponding to $\lambda_0$. 

In the situation where the potential $V$ is not real-valued or the Robin boundary 
operator~$B$ is not symmetric the Schr\"{o}dinger operator $A_B$ in \eqref{abintro}
is m-sectorial, but not self-adjoint in $L_2(\Omega)$.
Therefore, in general, the eigenvalues of $A_B$ are not semisimple and 
besides an eigenvector $f_0$ 
also (finitely many) generalized eigenvectors $f_1,\ldots, f_k$ are associated to an 
eigenvalue $\lambda_0$, which form a so-called Jordan chain.
It is the main objective of the present paper to analyse the
Jordan chains $f_0,f_1,\ldots,f_k$ corresponding to an eigenvalue $\lambda_0$ of $A_B$ 
with the help of the Dirichlet-to-Neumann operator in a similar form
as in the above mentioned Birman--Schwinger principle.
In fact, using the notion of Jordan chains for holomorphic operator functions due to 
M.V. Keldysh \cite{Keldysh} 
(see also \cite[$\S$11]{Markus}), it turns out in our main result
Theorem~\ref{main}
that $ \{ f_0,f_1,\ldots,f_k \} $ form a Jordan chain of $A_B$ at 
$\lambda_0\in\sigma_p(A_B)\cap\rho(A_D)$ if and only if the corresponding 
traces $\varphi_0=\Tr f_0,\varphi_1=\Tr f_1,\ldots,\varphi_k=\Tr f_k$ form a Jordan chain 
for the holomorphic  
$\cL(H^{1/2}(\Gamma),H^{-1/2}(\Gamma))$-valued operator function
$\lambda\mapsto M(\lambda)=D(\lambda)-B$ at $\lambda_0$, that is, 
\begin{equation}\label{intromain}
 \sum_{l=0}^j\frac{1}{l!} M^{(l)}(\lambda_0)\varphi_{j-l}=0
\end{equation}
for all $j \in \{ 0,\ldots,k \} $,
where $M^{(l)}(\lambda_0)$ denotes the $l$-th derivative of the function $M$ at $\lambda_0$.
Note that for $j=0$ the characterization of the eigenvector $f_0$ in the 
Birman--Schwinger principle follows from \eqref{intromain}; see the above discussion
or Corollary~\ref{maincor}.

The structure of this paper is as follows.
In Section~\ref{sec2} we briefly recall the notion of Jordan chains for operators and 
holomorphic operator functions.
In Section~\ref{Sjordanchain2} we introduce the elliptic differential operators 
and the corresponding
Dirichlet-to-Neumann map that is used for the analysis of the algebraic eigenspaces. 
Here we treat second-order divergence form elliptic operators with (complex) 
$L_\infty$-coefficients of the form 
\begin{equation*}
 \cA=- \sum_{k,l=1}^d \partial_k c_{kl} \partial_l  
   + \sum_{k=1}^d c_k \partial_k 
   -\sum_{k=1}^d \partial_k b_k  +c_0
\end{equation*}
on
bounded Lipschitz domains with non-local Robin boundary conditions.
In this general situation it is necessary to pay 
special attention to the definition and properties of the co-normal and adjoint 
co-normal derivative, and to the properties of the corresponding sesquilinear forms and operators.
Furthermore, 
the unique solvability of the homogeneous and inhomogeneous Dirichlet boundary value problems 
is discussed. For the convenience of the reader we provide proofs of these preparatory results in Section~\ref{Sjordanchain2}.
Our main result on the
characterization of Jordan chains of second-order elliptic partial differential operators 
with local or non-local Robin boundary conditions via Jordan chains of the 
Dirichlet-to-Neumann map $\lambda\mapsto D(\lambda)$ and the boundary operator $B$ is
formulated and proved in Section~\ref{sec4}.
The proof is technical and requires the preparatory Lemma~\ref{mainlem}.
Finally, in Subsection~\ref{Subjordanchain5.1} we
discuss a more regular situation in which the bounded domain $\Omega$ is assumed to have 
a $C^2$-smooth boundary and the coefficients of the
elliptic operator are slightly more regular.
In this setting one then obtains a Dirichlet-to-Neumann operator acting from 
$H^{3/2}(\Gamma)$ into $H^{1/2}(\Gamma)$ and a variant of Theorem~\ref{main} 
for $H^2(\Omega)$-smooth Jordan chains.
In Subsection~\ref{Subjordanchain5.2} we reconsider the Dirichlet-to-Neumann operator
on a Lipschitz domain, but now we treat the 
Dirichlet-to-Neumann operator acting from $H^1(\Gamma)$ into $L_2(\Gamma)$.
For this we require a smoothness and symmetry condition on the principal 
coefficients.

\paragraph*{Acknowledgements.} 
J. Behrndt is most grateful for the stimulating research
stay and the hospitality at the University of Auckland, where
parts of this paper were written. This work is supported by the Austrian Science
Fund (FWF), project P 25162-N26 and part of this work is supported by the
Marsden Fund Council from Government funding, administered by the Royal
Society of New Zealand.

\section{Jordan chains of operators and holomorphic operator functions}\label{sec2}

Throughout this paper the field is the complex numbers.
Let $A$ be a linear operator in a Banach space $\cH$.
Further, let $k\in\dN_0$, $f_0,\ldots,f_k \in \cH$ and $\lambda_0\in\dC$.
Then we say that the vectors $\{f_0,\ldots,f_k\}$ form a {\bf Jordan chain} for $A$ at 
$\lambda_0$ if $f_j\in\dom A$ for all $j \in \{ 0,\ldots,k \} $ satisfy 
\begin{equation*}
(A-\lambda_0) f_j=f_{j-1}
\end{equation*}
for all $j \in \{ 0,\ldots,k \} $
with $f_0\not=0$ and we set $f_{-1} = 0$.
The vector $f_0$ is called an {\bf eigenvector} of $A$ at the {\bf eigenvalue} 
$\lambda_0$ and the vectors $f_1,\ldots, f_k$ are said to be {\bf generalized eigenvectors}
of $A$ at $\lambda_0$.
Note that the generalized eigenvectors are all nonzero.

The notion of Jordan chains exists also for holomorphic operator functions and goes 
back to the work of M.V. Keldysh \cite{Keldysh}, for more details we also refer the reader
to the monograph \cite[$\S$11]{Markus}.
Let $\cH_1$ and $\cH_2$ be Banach spaces, $\cO\subset\dC$ an open set 
and for all $\lambda\in\cO$ let $M(\lambda) \in \cL(\cH_1,\cH_2)$.
Assume, in addition, that the operator function $\lambda\mapsto M(\lambda)$ is 
holomorphic on $\cO$ and denote the $l$-th derivative of $M(\cdot)$ at $\lambda\in\cO$
by $M^{(l)}(\lambda)$.
Let $k \in \dN_0$ and $\varphi_0,\ldots,\varphi_k \in \cH_1$.
Then we say that the vectors $\{\varphi_0,\ldots,\varphi_k\}$  
form a {\bf Jordan chain} for the function $M(\cdot)$ at $\lambda_0\in\cO$ if 
\begin{equation*}
 \sum_{l=0}^j\frac{1}{l!} M^{(l)}(\lambda_0)\varphi_{j-l}=0
\end{equation*}
for all $j \in \{ 0,\ldots,k \} $ and $\varphi_0\not=0$.
The vector $\varphi_0$ is called an {\bf eigenvector} of the operator function 
$M(\cdot)$ at the {\bf eigenvalue} $\lambda_0$ 
and the vectors $\varphi_1,\ldots, \varphi_k$ are said to be {\bf generalized eigenvectors}
of $M(\cdot)$ at $\lambda_0$. 

Observe that in the special case $\cH_1=\cH_2$ and $C\in\cL(\cH_1)$ the notion of Jordan chain for the operator $C$ at $\lambda_0\in\dC$
and the notion of Jordan chain for the function $\lambda\mapsto C-\lambda$ at $\lambda_0\in\dC$ coincide.

\section{Elliptic differential operators and Dirichlet-to-Neu\-mann maps} \label{Sjordanchain2}

Let $\Omega \subset \Ri^d$ be a bounded Lipschitz domain with boundary $\Gamma$.
By $H^1(\Omega)$ we denote the $L_2$-based Sobolev space of order $1$ on $\Omega$
and $H_0^1(\Omega)$ denotes the closure of the compactly supported $C_c^\infty(\Omega)$-functions 
in $H^1(\Omega)$.
On the Lipschitz boundary $\Gamma$ the Sobolev space  
$H^{1/2}(\Gamma)$ of order $1/2$ will play an important role.
Its dual is denoted by $H^{-1/2} (\Gamma)$ and $\langle\cdot,\cdot\rangle$ stands for 
the extension of the $L_2(\Gamma)$ inner product
onto the pair $H^{1/2} (\Gamma)\times H^{-1/2} (\Gamma)$.
Recall from \cite{McL} Theorem~3.37 that there is a continuous {\bf trace map}
$\Tr \colon H^1(\Omega) \to H^{1/2}(\Gamma)$ 
such that $\Tr f = f|_\Gamma$ for all $f \in H^1(\Omega) \cap C^1(\overline \Omega)$
and it admits a bounded right inverse.

For all $k,l \in \{ 1,\ldots,d \} $ fix $c_{kl}, b_k, c_k, c_0 \in L_\infty(\Omega)$.
We recall that the field is the complex numbers, so we emphasise that all 
coefficients are complex valued.
Assume that there exists a $\mu > 0$ such that 
\[
\RRe \sum_{k,l=1}^d c_{kl}(x) \, \xi_k \, \overline{\xi_l}
\geq \mu \, |\xi|^2
\]
for all $x \in \Omega$ and $\xi \in \Ci^d$.
Define the sesquilinear form $\gota \colon H^1(\Omega) \times H^1(\Omega) \to \Ci$
by 
\[
\gota(f,g)
= \sum_{k,l=1}^d \int_\Omega c_{kl} (\partial_l f) \overline{\partial_k g}
   + \sum_{k=1}^d \int_\Omega c_k (\partial_k f) \overline g
   + \sum_{k=1}^d \int_\Omega b_k f \overline{\partial_k g}
   + \int_\Omega c_0 f \overline g 
.  \]
The form $\gota$ is continuous in the sense that 
there exists an $M \geq 0$ such that 
$|\gota(f,g)| \leq M \, \|f\|_{H^1(\Omega)} \, \|g\|_{H^1(\Omega)}$
for all $f,g \in H^1(\Omega)$. One verifies in the same way as in the proof of \cite{AE1} Lemma~3.7 that the form is elliptic and hence 
\cite{AE2} Lemma~3.1 implies that $\gota$ is a closed sectorial form.

Introduce $\cA \colon H^1(\Omega) \to (H^1_0(\Omega))^*$ by 
\[
\langle\cA f,g\rangle_{(H^1_0(\Omega))^* \times H^1_0(\Omega)}
= \gota(f,g)
.
\]

In order to introduce the co-normal derivative we need a lemma.
Note that the ellipticity condition on the principal coefficients is not 
needed in the next lemma.

\begin{lemma} \label{ljordanchain310}
Let $f \in H^1(\Omega)$ and suppose that $\cA f \in L_2(\Omega)$.
Then there exists a unique $\psi \in H^{-1/2}(\Gamma)$ such that 
\[
\gota(f,g) - (\cA f, g)_{L_2(\Omega)}
= \langle \psi, \Tr g \rangle_{H^{-1/2}(\Gamma) \times H^{1/2}(\Gamma)}
\]
for all $g \in H^1(\Omega)$.
Moreover, there exists a constant $c > 0$, independent of $f$, such that 
$\|\psi\|_{H^{-1/2}(\Gamma)} \leq c (\|f\|_{H^1(\Omega)} + \|\cA f\|_{L_2(\Omega)})$.
\end{lemma}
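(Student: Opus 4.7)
The plan is to realize the right-hand side as a bounded antilinear functional on $H^{1/2}(\Gamma)$ obtained by factoring through the trace map a functional defined on $H^1(\Omega)$. First I would introduce
\[
\Phi(g) = \gota(f,g) - (\cA f,g)_{L_2(\Omega)}, \qquad g \in H^1(\Omega).
\]
Continuity of $\gota$ together with $|(\cA f,g)_{L_2(\Omega)}| \leq \|\cA f\|_{L_2(\Omega)} \|g\|_{L_2(\Omega)}$ and the continuous embedding $H^1(\Omega) \hookrightarrow L_2(\Omega)$ show that $\Phi$ is a bounded conjugate-linear functional on $H^1(\Omega)$ whose norm is dominated by $c\,(\|f\|_{H^1(\Omega)} + \|\cA f\|_{L_2(\Omega)})$ with a constant $c$ independent of $f$.

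Next I would verify that $\Phi$ vanishes on $H^1_0(\Omega)$. For $g \in H^1_0(\Omega)$ the defining relation for $\cA$ yields $\gota(f,g) = \langle \cA f, g\rangle_{(H^1_0(\Omega))^* \times H^1_0(\Omega)}$, and because $\cA f \in L_2(\Omega)$ the element it represents in $(H^1_0(\Omega))^*$ acts as the $L_2(\Omega)$ inner product on $H^1_0(\Omega)$. Hence the duality pairing coincides with $(\cA f, g)_{L_2(\Omega)}$ and $\Phi(g) = 0$.

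Finally, a bounded right inverse $E \colon H^{1/2}(\Gamma) \to H^1(\Omega)$ of $\Tr$ (existence of which is recalled from \cite{McL}) allows me to define $\psi$ by $\langle \psi,\varphi\rangle_{H^{-1/2}(\Gamma) \times H^{1/2}(\Gamma)} = \Phi(E\varphi)$ for $\varphi \in H^{1/2}(\Gamma)$; the boundedness of $\Phi$ and $E$ then gives the required estimate on $\|\psi\|_{H^{-1/2}(\Gamma)}$. To check the identity for an arbitrary $g \in H^1(\Omega)$ I would invoke the standard fact that $\ker \Tr = H^1_0(\Omega)$ for Lipschitz domains, and decompose $g = (g - E \Tr g) + E \Tr g$: the first summand lies in $H^1_0(\Omega)$ and is annihilated by $\Phi$, while the second yields $\Phi(E \Tr g) = \langle \psi, \Tr g\rangle$. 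Uniqueness of $\psi$ follows immediately from the surjectivity of $\Tr$ onto $H^{1/2}(\Gamma)$. The only mildly subtle ingredient is the equality $\ker \Tr = H^1_0(\Omega)$, where the Lipschitz regularity of $\Omega$ enters; the remainder is routine manipulation of the definitions.
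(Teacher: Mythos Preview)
Your argument is correct and essentially identical to the paper's: both define the antilinear functional $g \mapsto \gota(f,g) - (\cA f,g)_{L_2(\Omega)}$, observe that it vanishes on $H^1_0(\Omega)$, and then factor it through $\Tr$ using a bounded right inverse to produce $\psi \in H^{-1/2}(\Gamma)$ with the stated norm bound. You are slightly more explicit in invoking $\ker \Tr = H^1_0(\Omega)$ and writing out the decomposition $g = (g - E\Tr g) + E\Tr g$, whereas the paper compresses this into a single sentence, but the content is the same.
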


\begin{proof}
Define $F \colon H^1(\Omega) \to \Ci$ by 
$F(g) = \gota(f,g) - (\cA f, g)_{L_2(\Omega)}$.
Then $F$ is anti-linear and bounded. 
Explicitly, there exists an $M \geq 0$, independent of $f$, such that 
$$\|F\|_{H^1(\Omega)^*} \leq M \, \|f\|_{H^1(\Omega)} + \|\cA f\|_{L_2(\Omega)}.$$
Moreover, $F(g) = 0$ for all $g \in H^1_0(\Omega)$.
Hence there exists a unique anti-linear $\widetilde F \colon H^{1/2}(\Gamma) \to \Ci$
such that $\widetilde F(\Tr g) = F(g)$ for all $g \in H^1(\Omega)$.
The map $\widetilde F$ is bounded and 
$\|\widetilde F\|_{H^{/2}(\Gamma)^*} \leq \|F\|_{H^1(\Omega)^*} \, \|Z\|$,
where $Z \colon H^{1/2}(\Gamma) \to H^1(\Omega)$ is a bounded right 
inverse of $\Tr$.
Write $\psi = \widetilde F \in H^{1/2}(\Gamma)^* = H^{-1/2}(\Gamma)$.
Then
$\widetilde F(\varphi) = \langle \psi, \varphi\rangle_{H^{-1/2}(\Gamma) \times H^{1/2}(\Gamma)}$
for all $\varphi \in H^{1/2}(\Gamma)$ and the lemma follows. 
\end{proof}

If $f \in H^1(\Omega)$ with $\cA f \in L_2(\Omega)$, then we denote 
by $\gamma_N f \in H^{-1/2}(\Gamma)$ the function such that
\[
\gota(f,g) - (\cA f, g)_{L_2(\Omega)}
= \langle \gamma_N f, \Tr g \rangle_{H^{-1/2}(\Gamma) \times H^{1/2}(\Gamma)}
\]
for all $g \in H^1(\Omega)$.
We call $\gamma_N f$ the {\bf co-normal derivative} of $f$.

Denote by $\gota_D$ the restriction of $\gota$ to $H^1_0(\Omega) \times H^1_0(\Omega)$.
Then $\gota_D$ is a continous elliptic form and hence a closed sectorial form
(cf.\ \cite{AE2} Lemma~3.1.)
Denote by  $A_D$ the m-sectorial operator associated with the form $\gota_D$.
It follows that $A_D$ is the Dirichlet realization of $\cA$ in $L_2(\Omega)$
given by
\begin{equation*}
A_D f = \cA f,\quad \dom A_D = \bigl\{ f \in H^1_0(\Omega) : \cA f \in L_2(\Omega)\bigr\}.
\end{equation*}

\begin{lemma} \label{sollem}
Let $\lambda\in\rho(A_D)$.
Then the following assertions hold.
\begin{tabel}
\item \label{sollem-1}
For all $\varphi\in H^{1/2}(\Gamma)$
there exists a unique solution $f\in H^1(\Omega)$ of the homogeneous boundary value problem
\begin{equation}\label{hom}
 (\cA-\lambda)f=0
\quad \text{and} \quad \Tr f=\varphi.
\end{equation}
Moreover, the map $\varphi \mapsto f$ is continuous from $H^{1/2}(\Gamma)$ into 
$H^1(\Omega)$.
\item \label{sollem-2} 
For all $\varphi\in H^{1/2}(\Gamma)$ and all $h\in L_2(\Omega)$ 
there exists a unique solution $f\in H^1(\Omega)$ of the inhomogeneous boundary value problem
\begin{equation}\label{inhom}
 (\cA-\lambda)f=h
\quad \text{and} \quad 
\Tr f=\varphi.
\end{equation}
\end{tabel}
\end{lemma}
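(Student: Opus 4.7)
The plan is to reduce both assertions to the invertibility of a shifted Dirichlet form operator viewed between the pair $H^1_0(\Omega)$ and its dual. First I would introduce the sesquilinear form $\gota_\lambda = \gota - \lambda \, (\cdot,\cdot)_{L_2(\Omega)}$ and the bounded operator $\widetilde A_D - \lambda \colon H^1_0(\Omega) \to (H^1_0(\Omega))^*$ defined by $\langle (\widetilde A_D - \lambda) w, g\rangle = \gota_\lambda(w,g)$. The key preparatory step is to show that $\widetilde A_D - \lambda$ is a bijection. Since $\gota_D$ is continuous and sectorial, there exists $\omega \in \Ri$ for which $\gota_D + \omega \, (\cdot,\cdot)_{L_2(\Omega)}$ is strictly coercive on $H^1_0(\Omega)$; Lax--Milgram then gives that $\widetilde A_D + \omega$ is an isomorphism from $H^1_0(\Omega)$ onto $(H^1_0(\Omega))^*$. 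The Gelfand triple inclusions $H^1_0(\Omega) \hookrightarrow L_2(\Omega) \hookrightarrow (H^1_0(\Omega))^*$, with the first embedding compact by Rellich--Kondrachov, show that $\widetilde A_D - \lambda = (\widetilde A_D + \omega) - (\lambda + \omega) I$ is a compact perturbation of this isomorphism, hence Fredholm of index $0$. Its kernel consists of all $w \in H^1_0(\Omega)$ satisfying $\gota(w,g) = \lambda (w,g)_{L_2(\Omega)}$ for every $g \in H^1_0(\Omega)$; by the definition of the m-sectorial operator associated with the closed form $\gota_D$, any such $w$ lies in $\dom A_D$ with $A_D w = \lambda w$, and hence vanishes because $\lambda \in \rho(A_D)$. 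Bijectivity then follows from the Fredholm alternative.

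Given this bijectivity, for part (a) I pick the bounded right inverse $Z \colon H^{1/2}(\Gamma) \to H^1(\Omega)$ of $\Tr$ and set $u = Z\varphi$. Seeking a solution in the form $f = u + w$ with $w \in H^1_0(\Omega)$ automatically enforces $\Tr f = \varphi$. Testing $(\cA - \lambda) f = 0$ against arbitrary $g \in H^1_0(\Omega)$ reduces the equation to
\[
\gota_\lambda(w,g) = -\gota_\lambda(u,g),
\]
and continuity of $\gota_\lambda$ on $H^1(\Omega) \times H^1(\Omega)$ ensures that the right-hand side defines an element of $(H^1_0(\Omega))^*$. Existence and uniqueness of $w$ are then provided by the bijectivity of $\widetilde A_D - \lambda$. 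The continuity of the map $\varphi \mapsto f = Z\varphi + w$ follows from the open mapping theorem applied to $(\widetilde A_D - \lambda)^{-1}$ combined with the boundedness of $Z$ and $\gota_\lambda$.

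Part (b) is then obtained by superposition. Since $\lambda \in \rho(A_D)$, set $f_2 = (A_D - \lambda)^{-1} h \in \dom A_D \subset H^1_0(\Omega)$, so that $(\cA - \lambda) f_2 = h$ and $\Tr f_2 = 0$. Using part (a) one produces $f_1 \in H^1(\Omega)$ with $(\cA - \lambda) f_1 = 0$ and $\Tr f_1 = \varphi$, and $f := f_1 + f_2$ solves the inhomogeneous problem. For uniqueness, if $f$ and $\widetilde f$ both satisfy (\ref{inhom}), their difference lies in $H^1_0(\Omega)$ and satisfies $\cA(f - \widetilde f) = \lambda (f - \widetilde f) \in L_2(\Omega)$, so $f - \widetilde f \in \dom A_D$ is annihilated by $A_D - \lambda$ and hence vanishes.

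The main obstacle is the bijectivity of $\widetilde A_D - \lambda$ between $H^1_0(\Omega)$ and its dual, where one has to combine Lax--Milgram, a compactness argument through the Gelfand triple, and the identification of the $(H^1_0(\Omega))^*$-valued kernel with the operator-theoretic kernel of $A_D - \lambda$ on $\dom A_D$. Once this is in hand, the remainder of the proof is a standard lifting-and-superposition argument.
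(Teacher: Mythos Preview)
Your proposal is correct and follows essentially the same approach as the paper: both establish bijectivity of the shifted Dirichlet operator via Lax--Milgram plus a Fredholm/compact-perturbation argument together with the injectivity coming from $\lambda\in\rho(A_D)$, then lift the boundary datum and correct by an element of $H^1_0(\Omega)$, with part~(b) obtained by superposition with $(A_D-\lambda)^{-1}h$. The only cosmetic differences are that the paper represents the shifted form by an operator $T\in\cL(H^1_0(\Omega))$ via the Riesz isomorphism rather than mapping into $(H^1_0(\Omega))^*$, and it invokes the closed graph theorem for the continuity of $\varphi\mapsto f$ where you use the open mapping theorem.
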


\begin{proof}
`\ref{sollem-1}'.
The existence follows as in the proof of \cite{AE9} Lemma~2.1.
For completeness we give the details.
There exists a $T \in \cL(H^1_0(\Omega))$ such that 
\[
(T f,g)_{H^1_0(\Omega)} = \gota_D(f,g) - \lambda (f,g)_{L_2(\Omega)}
\]
for all $f,g \in H^1_0(\Omega)$.
Further there exists an $\omega > 0$ such that 
the sesquilinear form $\gotb \colon H^1_0(\Omega) \times H^1_0(\Omega) \to \Ci$
given by $\gotb(f,g) = \gota_D(f,g) - \lambda (f,g)_{L_2(\Omega)} + \omega (f,g)_{L_2(\Omega)}$
is coercive.
Let $j \colon H^1_0(\Omega) \to L_2(\Omega)$ be the (compact) inclusion map.
Then $\gotb(f,g) = ((T + K) f,g)_{H^1_0(\Omega)}$ 
for all $f,g \in H^1_0(\Omega)$, where $K = \omega j^* j$.
So $T+K$ is invertible by the Lax--Milgram theorem.
Consequently $T$ is a Fredholm operator because $K$ is compact.
Now $T$ is injective since $\lambda \in \rho(A_D)$.
Hence $T$ is surjective.

There exists an $f_0 \in H^1(\Omega)$ such that $\Tr f_0 = \varphi$.
Hence there exists an $h \in H^1_0(\Omega)$ such that 
$(T h,g)_{H^1_0(\Omega)} = \gota(f_0,g) - \lambda (f_0,g)_{L_2(\Omega)}$ 
for all $g \in H^1_0(\Omega)$.
Then $f = f_0 - h$ satisfies
\[
\langle\cA f-\lambda f,g\rangle_{(H^1_0(\Omega))^* \times H^1_0(\Omega)}
= \gota(f_0,g)-\lambda(f_0,g)_{L_2(\Omega)} - \gota_D(h,g)+\lambda(h,g)_{L_2(\Omega)}=0
\]
and hence $(\cA-\lambda)f=0$. 
The uniqueness is easy.
The continuity of the map follows from the closed graph theorem.

`\ref{sollem-2}'.
By Statement~\ref{sollem-1} there exists an $f_0 \in H^1(\Omega)$ such that 
$(\cA - \lambda) f_0 = 0$ and $\Tr f_0 = \varphi$.
Then $f_0 + (A_D - \lambda)^{-1} h$ is a solution to the problem~(\ref{inhom}).
Again the uniqueness is easy.
\end{proof}

Let $\lambda \in \rho(A_D)$.
Now we are able to define the {\bf Dirichlet-to-Neumann operator} 
$D(\lambda) \colon H^{1/2}(\Gamma) \to H^{-1/2}(\Gamma)$.
Let $\varphi \in H^{1/2}(\Gamma)$.
By Lemma~\ref{sollem}\ref{sollem-1} there exists a unique solution
$f \in H^1(\Omega)$ of the homogeneous boundary value problem
\eqref{hom}.
Then $\cA f = \lambda f \in L_2(\Omega)$.
Hence one can define 
\[
D(\lambda) \varphi = \gamma_N f
.  \]
Then $D(\lambda)$ is bounded operator from $H^{1/2}(\Gamma)$ into $H^{-1/2}(\Gamma)$
by the last parts of Lemmas~\ref{ljordanchain310} and \ref{sollem}\ref{sollem-1}.

We need two holomorphy results.

\begin{lemma} \label{ljordanchain201}
\mbox{}
\begin{tabel}
\item \label{ljordanchain201-1}
Let $\varphi \in H^{1/2}(\Gamma)$.
For all $\lambda \in \rho(A_D)$ let $g_\lambda \in H^1(\Omega)$ 
be the unique element such that 
$(\cA - \lambda) g_\lambda = 0$ and $\Tr g_\lambda = \varphi$.
Then the map $\lambda \mapsto g_\lambda$ is holomorphic from 
$\rho(A_D)$ into $H^1(\Omega)$.
\item \label{ljordanchain201-2}
The map $\lambda \mapsto D(\lambda)$ is holomorphic from 
$\rho(A_D)$ into $\cL(H^{1/2}(\Gamma),H^{-1/2}(\Gamma))$.
\end{tabel}
\end{lemma}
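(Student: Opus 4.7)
The plan is to express both $\lambda \mapsto g_\lambda$ and $\lambda \mapsto D(\lambda)$ as compositions of elementary holomorphic operator-valued maps, so that the holomorphy statements reduce to standard facts about inverses and compositions of holomorphic operator-valued functions. Fix throughout a bounded right inverse $Z \colon H^{1/2}(\Gamma) \to H^1(\Omega)$ of $\Tr$.

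For part \ref{ljordanchain201-1}, I would revisit the construction from the proof of Lemma~\ref{sollem}\ref{sollem-1}. Define $T_\lambda \in \cL(H^1_0(\Omega))$ and $R_\lambda \in \cL(H^1(\Omega), H^1_0(\Omega))$ by
\[
 (T_\lambda u,v)_{H^1_0(\Omega)} = \gota_D(u,v) - \lambda (u,v)_{L_2(\Omega)}, \qquad
 (R_\lambda u,v)_{H^1_0(\Omega)} = \gota(u,v) - \lambda (u,v)_{L_2(\Omega)}.
\]
Both families are affine in $\lambda$ and therefore entire. The proof of Lemma~\ref{sollem}\ref{sollem-1} shows that $T_\lambda$ is invertible for each $\lambda \in \rho(A_D)$, and a standard Neumann series argument then yields that $\lambda \mapsto T_\lambda^{-1}$ is holomorphic on $\rho(A_D)$ with values in $\cL(H^1_0(\Omega))$. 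Set $S(\lambda) := Z - T_\lambda^{-1} R_\lambda Z$, which is a holomorphic $\cL(H^{1/2}(\Gamma), H^1(\Omega))$-valued function on $\rho(A_D)$. Since the correction $T_\lambda^{-1} R_\lambda Z \varphi$ lies in $H^1_0(\Omega)$, we have $\Tr S(\lambda)\varphi = \varphi$, and testing against any $v \in H^1_0(\Omega)$ gives
\[
 \langle (\cA - \lambda) S(\lambda)\varphi, v\rangle_{(H^1_0(\Omega))^* \times H^1_0(\Omega)}
 = (R_\lambda Z\varphi - T_\lambda T_\lambda^{-1} R_\lambda Z\varphi, v)_{H^1_0(\Omega)} = 0.
\]
The uniqueness part of Lemma~\ref{sollem}\ref{sollem-1} therefore identifies $S(\lambda)\varphi$ with $g_\lambda$, which proves \ref{ljordanchain201-1}.

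For part \ref{ljordanchain201-2}, I would factor $D(\lambda)$ through $S(\lambda)$. Introduce $\cB(\lambda) \in \cL(H^1(\Omega), H^{-1/2}(\Gamma))$ by
\[
 \langle \cB(\lambda) f, \chi\rangle_{H^{-1/2}(\Gamma) \times H^{1/2}(\Gamma)}
 = \gota(f, Z\chi) - \lambda (f, Z\chi)_{L_2(\Omega)}
\]
for $f \in H^1(\Omega)$ and $\chi \in H^{1/2}(\Gamma)$. Boundedness follows from the continuity of $\gota$ together with the boundedness of $Z$, and $\lambda \mapsto \cB(\lambda)$ is affine in $\lambda$ and hence entire. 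Whenever $f \in H^1(\Omega)$ satisfies $\cA f = \lambda f \in L_2(\Omega)$, the defining property of $\gamma_N f$ from Lemma~\ref{ljordanchain310} applied with the test function $g = Z\chi$ (which has $\Tr g = \chi$) gives $\cB(\lambda) f = \gamma_N f$. Specializing to $f = S(\lambda)\varphi = g_\lambda$ yields $D(\lambda) = \cB(\lambda) \circ S(\lambda)$ on $\rho(A_D)$, and this is holomorphic as a composition of holomorphic operator-valued functions.

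The only step requiring genuine care is the algebraic verification in \ref{ljordanchain201-1} that the correction $T_\lambda^{-1} R_\lambda Z\varphi$ simultaneously kills the trace discrepancy and cancels the source $(\cA - \lambda) Z\varphi$ in $(H^1_0(\Omega))^*$; one must keep track of whether one is pairing in $(H^1_0(\Omega))^*$ or in $L_2(\Omega)$. Once $S(\lambda)$ is in place, both holomorphy assertions reduce to the routine observation that products and compositions of holomorphic operator-valued maps between Banach spaces are again holomorphic.
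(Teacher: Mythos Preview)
Your argument is correct, but it proceeds quite differently from the paper's proof.

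For part \ref{ljordanchain201-1} the paper fixes a base point $\lambda_0\in\rho(A_D)$, takes the already-constructed $g_{\lambda_0}$, and observes the resolvent-type identity
\[
g_\lambda=\bigl(1+(\lambda-\lambda_0)(A_D-\lambda)^{-1}\bigr)g_{\lambda_0},
\]
so that holomorphy of $\lambda\mapsto g_\lambda$ is inherited directly from holomorphy of the resolvent $\lambda\mapsto(A_D-\lambda)^{-1}$. You instead return to the Lax--Milgram machinery of Lemma~\ref{sollem}, package the solution operator as $S(\lambda)=Z-T_\lambda^{-1}R_\lambda Z$, and deduce holomorphy from that of $\lambda\mapsto T_\lambda^{-1}$. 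Your route is more self-contained and in fact gives a little more, namely holomorphy of the full solution operator $S(\lambda)\in\cL(H^{1/2}(\Gamma),H^1(\Omega))$ rather than just of $\lambda\mapsto g_\lambda$ for fixed $\varphi$; the paper's route is shorter and avoids revisiting the Fredholm argument.

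For part \ref{ljordanchain201-2} the paper tests $D(\lambda)\varphi$ against $\Tr h$ for arbitrary $h\in H^1(\Omega)$, obtains weak holomorphy from part \ref{ljordanchain201-1}, and then invokes the standard equivalence of weak and norm holomorphy. Your factorisation $D(\lambda)=\cB(\lambda)\circ S(\lambda)$ with $\cB(\lambda)$ affine gives norm holomorphy directly, which is arguably cleaner and sidesteps the weak-to-strong step entirely. Both approaches are valid; yours trades the resolvent identity for an explicit operator factorisation.
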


\begin{proof}
`\ref{ljordanchain201-1}'.
Fix $\lambda_0\in\rho(A_D)$.
By Lemma~\ref{sollem}\ref{sollem-1} there exists a 
unique $g_{\lambda_0}\in H^1(\Omega)$ such that 
$(\cA - \lambda_0) g_{\lambda_0} = 0$ and $\Tr g_{\lambda_0} = \varphi$.
Let $\lambda\in\rho(A_D)$ and consider
\begin{equation}\label{gres}
 g=\bigl(1+(\lambda-\lambda_0)(A_D-\lambda)^{-1}\bigr)g_{\lambda_0}\in H^1(\Omega).
\end{equation}
Then $(\cA-\lambda)g=(\cA-\lambda)g_{\lambda_0}+(\lambda-\lambda_0)g_{\lambda_0}=0$ 
and $\Tr g=\Tr g_{\lambda_0}=\varphi$.
Since the solution 
of the homogeneous boundary value problem $(\cA-\lambda)f=0$ with $\Tr f=\varphi$, 
is unique by Lemma~\ref{sollem}\ref{sollem-1} it follows that $g=g_\lambda$.
Now the holomorphy of the resolvent $\lambda\mapsto (A_D-\lambda)^{-1}$ in \eqref{gres} 
implies that the map $\lambda \mapsto g_\lambda$ is holomorphic from 
$\rho(A_D)$ into $H^1(\Omega)$.

`\ref{ljordanchain201-2}'.
Let $\varphi \in H^{1/2}(\Gamma)$ and $h \in H^1(\Omega)$.
For all $\lambda \in \rho(A_D)$ let 
$g_\lambda \in H^1(\Omega)$ be as in Statement~\ref{ljordanchain201-1}.
Then 
\begin{eqnarray*}
\langle D(\lambda) \varphi,\Tr h \rangle_{H^{-1/2}(\Gamma) \times H^{1/2}(\Gamma)}
& = & \langle \gamma_N g_\lambda, \Tr h \rangle_{H^{-1/2}(\Gamma) \times H^{1/2}(\Gamma)}  \\
& = & \gota(g_\lambda, h) - (\cA g_\lambda, h)_{L_2(\Omega)}  \\
& = & \gota(g_\lambda, h) - \lambda (g_\lambda, h)_{L_2(\Omega)}
\end{eqnarray*}
for all $\lambda \in \rho(A_D)$.
Since $\lambda \mapsto g_\lambda$ is holomorphic from 
$\rho(A_D)$ into $H^1(\Omega)$ by Statement~\ref{ljordanchain201-1}, it follows that 
$\lambda \mapsto D(\lambda)$ is holomorphic with respect to the 
weak operator topology on $\cL(H^{1/2}(\Gamma),H^{-1/2}(\Gamma))$, and therefore
it is also holomorphic with respect to the uniform operator topology.
\end{proof}

For all $l \in \Ni$ we denote the $l$-th derivative 
of $\lambda \mapsto D(\lambda)$ at $\lambda\in\rho(A_D)$ by $D^{(l)}(\lambda)$.
Then according to Lemma~\ref{ljordanchain201}\ref{ljordanchain201-2} one has 
\[
D^{(l)}(\lambda) \in \cL(H^{1/2}(\Gamma),H^{-1/2}(\Gamma))
\]
for all $\lambda\in\rho(A_D)$.

The {\bf dual form} $\gota^*$ of $\gota$ is defined by 
$\dom(\gota^*) = H^1(\Omega)$ and $\gota^*(f,g) = \overline{\gota(g,f)}$
for all $f,g \in H^1(\Omega)$.
So 
\[
\gota^*(f,g)
= \sum_{k,l=1}^d \int_\Omega \overline{c_{lk}} (\partial_l f) \overline{\partial_k g}
   + \sum_{k=1}^d \int_\Omega \overline{b_k} (\partial_k f) \overline g
   + \sum_{k=1}^d \int_\Omega \overline{c_k} f \overline{\partial_k g}
   + \int_\Omega \overline{c_0} f \overline g 
.  \]
Obviously $\gota^*$ is of the same type as $\gota$, with $c_{kl}$ replaced 
by $\overline{c_{lk}}$, etc.
Similar to the definition of $\cA$ with respect to $\gota$, we can define
the operator $\widetilde \cA \colon H^1(\Omega) \to (H^1_0(\Omega))^*$ by 
\[
\langle \widetilde \cA f,g\rangle_{(H^1_0(\Omega))^* \times H^1_0(\Omega)}
= \gota^*(f,g)
.
\]
As in Lemma~\ref{ljordanchain310} it follows that 
for all $f \in H^1(\Omega)$ with $\widetilde \cA f \in L_2(\Omega)$, there 
exists a unique $\widetilde \gamma_N f \in H^{-1/2}(\Gamma)$ such that
\[
\gota^*(f,g) - (\widetilde \cA f, g)_{L_2(\Omega)}
= \langle \widetilde \gamma_N f, \Tr g \rangle_{H^{-1/2}(\Gamma) \times H^{1/2}(\Gamma)}
\]
for all $g \in H^1(\Omega)$.
Using all definitions it is easy to prove the following version of 
Green's second identity.

\begin{lemma} \label{ljordanchain311}
Let $f,g \in H^1(\Omega)$ and suppose that $\cA f,\widetilde\cA g\in L_2(\Omega)$.
Then
\begin{equation}\label{green2}
(\cA f,g)_{L_2(\Omega)} - (f,\widetilde\cA g)_{L_2(\Omega)}
= \langle \Tr f,\widetilde\gamma_N g\rangle_{H^{1/2}(\Gamma) \times H^{-1/2}(\Gamma)}
   - \langle\gamma_N f,\Tr g\rangle_{H^{-1/2}(\Gamma) \times H^{1/2}(\Gamma)}
.
\end{equation}
\end{lemma}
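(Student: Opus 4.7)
The plan is to apply the two definitions of co-normal derivative (for $\cA$ and for $\widetilde \cA$) and to combine them using the relation $\gota^*(f,g) = \overline{\gota(g,f)}$. Essentially everything has already been done when the co-normal derivatives were introduced; the content of the lemma is just a bookkeeping exercise.

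Concretely, I would proceed as follows. First, since $\cA f \in L_2(\Omega)$, I can take $g$ as the test function in the defining identity of $\gamma_N f$ to obtain
\[
\gota(f,g) - (\cA f,g)_{L_2(\Omega)} = \langle \gamma_N f, \Tr g\rangle_{H^{-1/2}(\Gamma) \times H^{1/2}(\Gamma)}.
\]
Second, since $\widetilde \cA g \in L_2(\Omega)$, I can take $f$ as the test function in the defining identity of $\widetilde \gamma_N g$ to obtain
\[
\gota^*(g,f) - (\widetilde \cA g,f)_{L_2(\Omega)} = \langle \widetilde\gamma_N g, \Tr f\rangle_{H^{-1/2}(\Gamma) \times H^{1/2}(\Gamma)}.
\]

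Next I would take the complex conjugate of the second identity. Using that $\gota^*(g,f) = \overline{\gota(f,g)}$ by the very definition of the dual form, one gets $\overline{\gota^*(g,f)} = \gota(f,g)$. Moreover $\overline{(\widetilde\cA g,f)_{L_2(\Omega)}} = (f,\widetilde\cA g)_{L_2(\Omega)}$, and since the pairing $\langle\cdot,\cdot\rangle$ extends the (sesquilinear) $L_2(\Gamma)$-inner product, one has
\[
\overline{\langle \widetilde\gamma_N g, \Tr f\rangle_{H^{-1/2}(\Gamma) \times H^{1/2}(\Gamma)}} = \langle \Tr f, \widetilde\gamma_N g\rangle_{H^{1/2}(\Gamma) \times H^{-1/2}(\Gamma)}.
\]
Hence the conjugate of the second identity reads
\[
\gota(f,g) - (f,\widetilde\cA g)_{L_2(\Omega)} = \langle \Tr f, \widetilde\gamma_N g\rangle_{H^{1/2}(\Gamma) \times H^{-1/2}(\Gamma)}.
\]

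Finally I would subtract this from the first identity; the terms $\gota(f,g)$ cancel and rearranging yields precisely \eqref{green2}. The only point that requires a moment of care is the sesquilinearity convention for the duality pairing (which direction is antilinear) when conjugating the boundary term; once this is checked against the convention stated just after the introduction of $H^{-1/2}(\Gamma)$, the proof is a one-line algebraic manipulation. No genuine obstacle is expected.
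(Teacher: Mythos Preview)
Your proof is correct and is precisely the ``using all definitions'' argument the paper has in mind; the paper does not even spell out the details. Your only cautionary remark about the sesquilinearity convention of the duality pairing is well placed, and you handled it correctly.
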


Denote by $\gota_D^*$ the restriction of the dual form $\gota^*$ to $H^1_0(\Omega) \times H^1_0(\Omega)$.
Then $\gota_D^*$ is a closed sectorial form and 
the m-sectorial operator associated with $\gota_D^*$ is equal to the 
adjoint $A_D^*$ of $A_D$, see \cite{Kat1} Theorem~VI.2.5. 
It follows that $A_D^*$ is the Dirichlet realization of $\widetilde\cA$ in $L_2(\Omega)$
given by
\begin{equation*}
A_D^* f = \widetilde \cA f,\quad \dom A_D^* = \bigl\{ f \in H^1_0(\Omega) : \widetilde\cA f \in L_2(\Omega)\bigr\}.
\end{equation*}

Similarly to the Dirichlet-to-Neumann map $D(\lambda)\in\cL(H^{1/2}(\Gamma),H^{-1/2}(\Gamma))$ 
one associates the Dirichlet-to-Neumann map
$\widetilde D(\lambda)\in\cL(H^{1/2}(\Gamma),H^{-1/2}(\Gamma))$ to the adjoint form
$\gota^*$ for all $\lambda \in \rho(A_D^*)$. 
A simple computation based on Greens second identity \eqref{green2} shows 
\begin{equation}
\langle D(\lambda)\varphi,\psi\rangle_{H^{-1/2}(\Gamma) \times H^{1/2}(\Gamma)}
 =\langle\varphi,\widetilde D(\overline \lambda)\psi\rangle_{H^{1/2}(\Gamma) \times H^{-1/2}(\Gamma)}
\label{eGreenSecond}
\end{equation}
for all $\varphi,\psi\in H^{1/2}(\Gamma)$ and $\lambda\in\rho(A_D)$.

Finally we introduce the Robin operator.
Let $B\in\cL(H^{1/2}(\Gamma),H^{-1/2}(\Gamma))$.
We assume that there is an $\eta > 0$ such that 
\begin{equation}\label{bsemi}
\RRe\langle B \varphi,\varphi\rangle_{H^{-1/2}(\Gamma)\times   H^{1/2}(\Gamma)}
\leq \eta \|\varphi\|_{L_2(\Gamma)}^2
\end{equation}
for all $\varphi\in H^{1/2}(\Gamma)$.
Note that the restriction to the space $H^{1/2}(\Gamma)$ 
of every bounded operator $B$ in $L^2(\Gamma)$ 
can be viewed as an operator in 
$\cL(H^{1/2}(\Gamma),H^{-1/2}(\Gamma))$ that satisfies \eqref{bsemi}.
We also note that
the above assumption on $B\in\cL(H^{1/2}(\Gamma),H^{-1/2}(\Gamma))$ can be generalized 
further as in for example \cite{GesM} Hypothesis~4.1.
Next we define the 
sesquilinear form $\gota_B \colon H^1(\Omega) \times H^1(\Omega) \to \Ci$
by 
\[
\gota_B(f,g) = \gota(f,g) - \langle B\Tr f,\Tr g\rangle_{H^{-1/2}(\Gamma)\times H^{1/2}(\Gamma)}
.  
\]

\begin{proposition}\label{ljordan2-313}
The form $\gota_B$ is densely defined,  closed and sectorial in $L_2(\Omega)$. 
The associated m-sectorial operator 
\[
 A_B f = \cA f,\qquad 
\dom A_B=\bigl\{f\in H^1(\Omega): \cA f\in L_2(\Omega) \mbox{ and } \gamma_N f =B\Tr f \bigr\},
\]
is the {\bf Robin realisation} of $\cA$ in $L_2(\Omega)$. 
\end{proposition}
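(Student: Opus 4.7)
The plan is to verify the three properties (dense domain, closed, sectorial) for $\gota_B$ on the form domain $H^1(\Omega)$, and then identify the associated operator via the first representation theorem.

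First I would observe that $\gota_B$ is densely defined in $L_2(\Omega)$ since $C_c^\infty(\Omega) \subset H^1(\Omega)$, and continuous on $H^1(\Omega)$: the boundary correction is bounded by
\[
|\langle B\Tr f,\Tr g\rangle|
\leq \|B\|\,\|\Tr f\|_{H^{1/2}(\Gamma)}\,\|\Tr g\|_{H^{1/2}(\Gamma)}
\leq \|B\|\,\|\Tr\|^2\,\|f\|_{H^1(\Omega)}\,\|g\|_{H^1(\Omega)},
\]
so $\gota_B$ inherits $H^1(\Omega)$-continuity from $\gota$.

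Next I would establish sectoriality. The key tool is the Ehrling-type estimate: using the continuity of $\Tr\colon H^1(\Omega)\to L_2(\Gamma)$ together with the compactness of $H^1(\Omega)\hookrightarrow L_2(\Omega)$, for every $\varepsilon>0$ there is a $C_\varepsilon>0$ with
\[
\|\Tr f\|_{L_2(\Gamma)}^2 \leq \varepsilon\,\|f\|_{H^1(\Omega)}^2 + C_\varepsilon\,\|f\|_{L_2(\Omega)}^2
\qquad (f\in H^1(\Omega)).
\]
Combining this with hypothesis \eqref{bsemi} yields $\RRe \langle B\Tr f,\Tr f\rangle \leq \eta\varepsilon\,\|f\|_{H^1(\Omega)}^2 + \eta C_\varepsilon\,\|f\|_{L_2(\Omega)}^2$. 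Since $\gota$ is elliptic, $\RRe\gota(f,f)\geq c_1\|f\|_{H^1(\Omega)}^2 - c_2\|f\|_{L_2(\Omega)}^2$ for some $c_1>0$ and $c_2\in\Ri$. Choosing $\varepsilon$ so that $\eta\varepsilon<c_1/2$ gives
\[
\RRe\gota_B(f,f) \geq \tfrac{c_1}{2}\|f\|_{H^1(\Omega)}^2 - (c_2 + \eta C_\varepsilon)\|f\|_{L_2(\Omega)}^2,
\]
so $\gota_B$ is coercive modulo $L_2(\Omega)$. The imaginary part satisfies an analogous estimate $|\IIm\gota_B(f,f)|\leq M'\|f\|_{H^1(\Omega)}^2$ from continuity, which combines with the above to give a sector condition $|\IIm\gota_B(f,f)|\leq M''(\RRe\gota_B(f,f)+\omega\|f\|_{L_2(\Omega)}^2)$ for a suitable $\omega$. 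Together with the fact that $(H^1(\Omega),\|\cdot\|_{H^1(\Omega)})$ is complete and the norms induced by $\RRe\gota_B + (\omega+1)\|\cdot\|_{L_2(\Omega)}^2$ and $\|\cdot\|_{H^1(\Omega)}^2$ are equivalent, this yields closedness of $\gota_B$.

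For the identification of the associated m-sectorial operator $A_B$, I would apply the first representation theorem: $A_B$ has domain
\[
\dom A_B = \bigl\{f\in H^1(\Omega): \exists u\in L_2(\Omega)\text{ with }\gota_B(f,g)=(u,g)_{L_2(\Omega)}\ \forall g\in H^1(\Omega)\bigr\}
\]
and $A_B f = u$. Restricting to $g\in H^1_0(\Omega)$, where $\Tr g=0$, gives $\gota_B(f,g)=\gota(f,g)=(u,g)_{L_2(\Omega)}$, so $\cA f = u\in L_2(\Omega)$. For general $g\in H^1(\Omega)$ the defining identity rewrites as
\[
\langle\gamma_N f - B\Tr f,\Tr g\rangle_{H^{-1/2}(\Gamma)\times H^{1/2}(\Gamma)}
=\gota(f,g)-(\cA f,g)_{L_2(\Omega)} - \langle B\Tr f,\Tr g\rangle = 0,
\]
and since $\Tr$ maps $H^1(\Omega)$ onto $H^{1/2}(\Gamma)$, we conclude $\gamma_N f = B\Tr f$. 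Conversely, any $f\in H^1(\Omega)$ with $\cA f\in L_2(\Omega)$ and $\gamma_N f = B\Tr f$ satisfies $\gota_B(f,g)=(\cA f,g)_{L_2(\Omega)}$ for all $g\in H^1(\Omega)$ by the definition of $\gamma_N$ in Lemma~\ref{ljordanchain310}. Hence $\dom A_B$ and $A_B f$ coincide with the stated expressions.

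The main obstacle I anticipate is the sectoriality/closedness step, specifically producing the Ehrling-type trace bound that allows the boundary perturbation to be absorbed into the ellipticity of $\gota$ at the cost of an $L_2$-term; this is what forces the use of assumption \eqref{bsemi} in the form it is stated rather than a naive boundedness bound. The remaining steps are essentially bookkeeping with Green's identity and the surjectivity of $\Tr$.
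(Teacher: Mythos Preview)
Your proof is correct and follows essentially the same approach as the paper: both establish ellipticity of $\gota_B$ by absorbing the boundary term via an Ehrling-type trace estimate combined with assumption \eqref{bsemi}, and then identify the associated operator through the definition of $\gamma_N$ and surjectivity of $\Tr$. One small imprecision: the Ehrling estimate $\|\Tr f\|_{L_2(\Gamma)}^2 \leq \varepsilon\|f\|_{H^1(\Omega)}^2 + C_\varepsilon\|f\|_{L_2(\Omega)}^2$ is driven by the \emph{compactness} of $\Tr\colon H^1(\Omega)\to L_2(\Gamma)$ (as the paper phrases it), not by continuity of $\Tr$ together with compactness of $H^1(\Omega)\hookrightarrow L_2(\Omega)$; the latter pair of ingredients does not by itself yield the estimate, though the needed compactness of the trace is of course standard on Lipschitz domains.
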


\begin{proof}
We will show first that $\gota_B$ is elliptic, that is,
there are $\nu \in \Ri$ and $\mu > 0$ such that 
\begin{equation} \label{elliab}
 \RRe \gota_B(f) + \nu \|f\|_{L_2(\Omega)}^2 \geq \mu \|f\|_{H^1(\Omega)}^2
\end{equation}
for all $f\in H^1(\Omega)$.
Clearly there are $\mu_1,\omega_1 > 0$ such that 
$\RRe \gota(f) \geq 2 \mu_1 \|f\|_{H^1(\Omega)}^2 - \omega_1 \|f\|_{L_2(\Omega)}^2$
for all $f \in H^1(\Omega)$ (cf.\ \cite{AE1} Lemma~3.7.)
Choose $\varepsilon <\frac{\mu_1}{\eta}$, where $\eta > 0$ is as in \eqref{bsemi}.
By Ehrlings lemma and the compactness of $\Tr \colon H^1(\Omega) \to L_2(\Gamma)$
there exists a $c > 0$ such that 
$\|\Tr f\|_{L_2(\Gamma)}^2 \leq \varepsilon \|f\|_{H^1(\Omega)}^2 + c \|f\|_{L_2(\Omega)}^2$
for all $f \in H^1(\Omega)$.
Then 
\[
\RRe\langle B \Tr f,\Tr f\rangle_{H^{-1/2}(\Gamma)\times H^{1/2}(\Gamma)}
\leq \eta \|\Tr f\|_{L_2(\Gamma)}^2
\leq \mu_1 \|f\|_{H^1(\Omega)}^2 + \eta c \|f\|_{L_2(\Omega)}^2
\]
and hence
\[
\RRe \gota_B(f) =\RRe \gota(f)-\RRe\langle B \Tr f,\Tr f\rangle_{H^{-1/2}(\Gamma)\times H^{1/2}(\Gamma)}
\geq \mu_1 \|f\|_{H^1(\Omega)}^2 
     - (\omega_1+\eta c) \|f\|_{L_2(\Omega)}^2
\]
for all $f \in H^1(\Omega)$. 
So \eqref{elliab} holds with $\mu=\mu_1$ and 
$\nu=\omega_1+\eta c$, therefore $\gota_B$ is elliptic.
Hence $\gota_B$ is a densely defined, closed, sectorial form (see \cite{AE2} Lemma~3.1). 

The graph of the m-sectorial operator associated to $\gota_B$ is given by
\[
G=\bigl\{ (f,h) \in H^1(\Omega) \times L_2(\Omega) : 
      \gota_B(f,g) = (h,g)_{L_2(\Omega)} \mbox{ for all } g \in H^1(\Omega) \bigr\}
 \]
 and it remains to show that $G$ coincides with the Robin realisation $A_B$. 
Now let $f \in \dom G$ and 
write $h = G f \in L_2(\Omega)$.
Then $f \in H^1(\Omega)$ and 
\[
\langle\cA f, g\rangle_{(H^1_0(\Omega))^* \times H^1_0(\Omega)}
= \gota(f,g) 
= \gota_B(f,g)
= (h,g)_{L_2(\Omega)}
\]
for all $g \in H^1_0(\Omega)$.
So $\cA f = h = Gf \in L_2(\Omega)$.
If $g \in H^1(\Omega)$, then
\begin{eqnarray*}
\gota(f,g) - (\cA f,g)_{L_2(\Omega)}
& = & \gota_B(f,g) + \langle B \Tr f,\Tr g\rangle_{H^{-1/2}(\Gamma)\times H^{1/2}(\Gamma)} - (h,g)_{L_2(\Omega)}  \\
& = & \langle B \Tr f,\Tr g\rangle_{H^{-1/2}(\Gamma)\times H^{1/2}(\Gamma)}.
\end{eqnarray*}
So $\gamma_N f = B \Tr f$ and hence $f\in\dom A_B$.
The converse inclusion follows similarly.
\end{proof}

\section{Jordan chains of Robin realizations}\label{sec4}

Adopt the assumptions and notation as in Section~\ref{Sjordanchain2}.
In this section we formulate and prove our main result on the characterization of Jordan chains of the m-sectorial 
Robin realization $A_B$ of $\cA$ via the operator function $\lambda\mapsto D(\lambda)-B$.
Our goal is to show the following theorem.

\begin{theorem}\label{main}
Let $A_B$ be the Robin realisation of $\cA$ in $L_2(\Omega)$ as 
in Proposition~\ref{ljordan2-313}, let $\lambda_0\in\rho(A_D)$ and consider the 
holomorphic function
\begin{equation}\label{dnb}
 \lambda\mapsto D(\lambda) - B 
\end{equation}
from $\rho(A_D)$ into $\cL(H^{1/2}(\Gamma),H^{-1/2}(\Gamma))$.
Then the following holds.
\begin{tabel}
\item \label{main-1}
Let $\{f_0,\dots,f_k\}$ be a Jordan chain for $A_B$ at $\lambda_0$.
For all $m \in \{ 0,\ldots,k \} $ define $\varphi_m=\Tr f_m$.
Then $\{\varphi_0,\dots, \varphi_k\}$ is a Jordan chain
for the function \eqref{dnb} at $\lambda_0$.
\item \label{main-2}
Let $\{\varphi_0,\dots,\varphi_k\}$ be a Jordan chain for the function \eqref{dnb}
at $\lambda_0$.
Set $f_{-1} = 0$.
For all $m \in \{ 0,\ldots,k \} $ let 
$f_m\in H^1(\Omega)$ be the unique solution of the boundary value problem
\[
(\cA-\lambda_0)f_m=f_{m-1},\qquad \Tr f_m=\varphi_m.
\]
Then $\{f_0,\dots,f_k\}$ is a Jordan chain for $A_B$ at $\lambda_0$.
\end{tabel}
\end{theorem}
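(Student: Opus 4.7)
\noindent
The plan is to treat both directions simultaneously by introducing a polynomial generating function and comparing Taylor coefficients at $\lambda_0$. Set
\[
F(\lambda) = \sum_{m=0}^{k} (\lambda - \lambda_0)^m f_m
\quad\text{and}\quad
\Phi(\lambda) = \Tr F(\lambda) = \sum_{m=0}^{k} (\lambda - \lambda_0)^m \varphi_m .
\]
A direct calculation using only the recursion $(\cA - \lambda_0) f_m = f_{m-1}$, $f_{-1} = 0$, gives the telescoping identity
\[
(\cA - \lambda) F(\lambda) = -(\lambda - \lambda_0)^{k+1} f_k .
\]
Since $\lambda_0 \in \rho(A_D)$ and $f_k \in L_2(\Omega)$, for $\lambda$ in a neighbourhood of $\lambda_0$ inside $\rho(A_D)$ we correct $F(\lambda)$ by setting
\[
G(\lambda) = F(\lambda) + (\lambda - \lambda_0)^{k+1} (A_D - \lambda)^{-1} f_k .
\]
The correction lies in $\dom A_D \subset H^1_0(\Omega)$, so $\Tr G(\lambda) = \Phi(\lambda)$, and $(\cA - \lambda) G(\lambda) = 0$. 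By the definition of the Dirichlet-to-Neumann operator, $D(\lambda) \Phi(\lambda) = \gamma_N G(\lambda)$, and by linearity of $\gamma_N$ we obtain the \emph{master identity}
\[
(D(\lambda) - B) \Phi(\lambda)
 = \bigl(\gamma_N F(\lambda) - B\Phi(\lambda)\bigr)
     + (\lambda - \lambda_0)^{k+1}\, \gamma_N (A_D - \lambda)^{-1} f_k .
\]

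For part~\ref{main-1}, each $f_m$ lies in $\dom A_B$, so $\gamma_N f_m = B\varphi_m$ for every $m$; consequently $\gamma_N F(\lambda) - B\Phi(\lambda) = 0$ identically, and the right-hand side of the master identity has a zero of order at least $k+1$ at $\lambda_0$. Expanding the left-hand side in a Taylor series about $\lambda_0$, the coefficient of $(\lambda - \lambda_0)^j$ for $j \in \{0,\ldots,k\}$ is precisely
\[
\sum_{l=0}^{j} \frac{1}{l!}\, M^{(l)}(\lambda_0)\, \varphi_{j-l},
\]
which must therefore vanish, yielding the Jordan chain relation for $M(\lambda) = D(\lambda) - B$. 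The non-triviality $\varphi_0 \neq 0$ follows because $\Tr f_0 = 0$ would force $f_0$ to be an eigenfunction of $A_D$ at $\lambda_0 \in \rho(A_D)$, contradicting $f_0 \neq 0$.

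For part~\ref{main-2}, Lemma~\ref{sollem}\ref{sollem-2} supplies the $f_m \in H^1(\Omega)$ uniquely and inductively, and $\cA f_m = \lambda_0 f_m + f_{m-1} \in L_2(\Omega)$, so each $\gamma_N f_m$ is well defined. Now the Jordan chain hypothesis for $\Phi$ means that the left-hand side of the master identity has a zero of order at least $k+1$ at $\lambda_0$, so the same holds for the difference $\gamma_N F(\lambda) - B\Phi(\lambda)$. But this difference is a polynomial in $(\lambda - \lambda_0)$ of degree at most $k$ with coefficients $\gamma_N f_m - B \varphi_m \in H^{-1/2}(\Gamma)$, so all coefficients must vanish. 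Hence $\gamma_N f_m = B \varphi_m$ for every $m$, which is exactly the Robin condition putting $f_m \in \dom A_B$. The identity $(A_B - \lambda_0) f_m = f_{m-1}$ holds by construction, and $f_0 \neq 0$ since $\Tr f_0 = \varphi_0 \neq 0$.

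\noindent
The main obstacle is the rigorous justification of the master identity and of the Taylor matching. One must verify that $\gamma_N (A_D - \lambda)^{-1} f_k$ depends holomorphically on $\lambda$ near $\lambda_0$ (which uses the continuity estimate in Lemma~\ref{ljordanchain310} together with the holomorphy of $\lambda \mapsto (A_D - \lambda)^{-1} f_k$ in the graph norm of $A_D$), and that $\gamma_N$ commutes with the finite sums defining $F(\lambda)$, so that $\gamma_N F(\lambda) = \sum_m (\lambda - \lambda_0)^m \gamma_N f_m$. These are precisely the technicalities that the preparatory Lemma~\ref{mainlem} is expected to encapsulate; once they are in place, both directions reduce to the comparison-of-coefficients argument sketched above.
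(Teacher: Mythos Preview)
Your argument is correct and takes a genuinely different route from the paper's. The paper proves a preparatory Lemma~\ref{mainlem} by pairing each $f_j$ against solutions $g_{\overline\lambda}$ of the \emph{adjoint} problem via Green's second identity~\eqref{green2}, then runs an induction on $j$ to establish the identity~\eqref{formula} and finally passes to the limit $\lambda\to\lambda_0$. Your generating-function trick bypasses all of this: the telescoping identity $(\cA-\lambda)F(\lambda)=-(\lambda-\lambda_0)^{k+1}f_k$ and the correction $G(\lambda)=F(\lambda)+(\lambda-\lambda_0)^{k+1}(A_D-\lambda)^{-1}f_k$ produce an exact solution of the homogeneous problem with trace $\Phi(\lambda)$, so $D(\lambda)\Phi(\lambda)=\gamma_N G(\lambda)$ holds by definition, and the master identity drops out without any recourse to the adjoint form $\gota^*$, the adjoint map $\widetilde D$, or induction. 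The Taylor-coefficient matching then handles both directions in one stroke. This is shorter and conceptually cleaner than the paper's approach; what the paper's method buys is an explicit formula~\eqref{iieq} expressing $(f_{j-1},g)_{L_2(\Omega)}$ in terms of the derivatives $D^{(l)}(\lambda_0)$, which may be of independent interest but is not needed for the theorem itself.

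One correction to your final paragraph: the technicalities you flag (holomorphy of $\lambda\mapsto\gamma_N(A_D-\lambda)^{-1}f_k$ in $H^{-1/2}(\Gamma)$, and linearity of $\gamma_N$ on finite sums) are \emph{not} what Lemma~\ref{mainlem} encapsulates --- that lemma pursues the adjoint-pairing route and is logically independent of your argument. Your technicalities are justified directly: linearity of $\gamma_N$ is immediate from Lemma~\ref{ljordanchain310}, and for the holomorphy one observes that $\lambda\mapsto(A_D-\lambda)^{-1}f_k$ is holomorphic into $H^1_0(\Omega)$ (via the resolvent identity and the fact that $(A_D-\mu)^{-1}\in\cL(L_2(\Omega),H^1_0(\Omega))$ for one $\mu$) while $\cA(A_D-\lambda)^{-1}f_k=f_k+\lambda(A_D-\lambda)^{-1}f_k$ is holomorphic into $L_2(\Omega)$; the continuity estimate in Lemma~\ref{ljordanchain310} then gives holomorphy of $\gamma_N(A_D-\lambda)^{-1}f_k$ into $H^{-1/2}(\Gamma)$. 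With that in place your proof is complete and self-contained.
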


For the special case $k=0$ one obtains the following well-known result.

\begin{corollary}\label{maincor}
Adopt the notation and assumptions as in Theorem~\ref{main}.
Then the following holds.
\begin{tabel}
\item \label{mainc-1}
If $f_0$ is an eigenvector of $A_B$ at $\lambda_0$, then 
$D(\lambda_0)\Tr f_0=B\Tr f_0$ and $\Tr f_0\not=0$.
\item \label{mainc-2}
If $D(\lambda_0)\varphi_0=B\varphi_0$ and $\varphi_0\not=0$, 
then the unique solution $f_0\in H^1(\Omega)$ of the boundary value problem
\begin{equation*}
  (\cA-\lambda_0)f_0=0,\qquad \Tr f_0=\varphi_0,
\end{equation*}
is an eigenvector of $A_B$ at $\lambda_0$.
\end{tabel}
\end{corollary}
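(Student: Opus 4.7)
The plan is to deduce the corollary by simply unwinding the definitions of $A_B$, $D(\lambda_0)$, and $\gamma_N$, using Lemma~\ref{sollem}\ref{sollem-1} as the only nontrivial input. Although formally the statement is the $k=0$ case of Theorem~\ref{main}, I would write it out directly since no recursive machinery is needed.

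For part \ref{mainc-1}, suppose $f_0 \in \dom A_B$ with $A_B f_0 = \lambda_0 f_0$ and $f_0 \neq 0$. By the description of $\dom A_B$ in Proposition~\ref{ljordan2-313}, we have $f_0 \in H^1(\Omega)$, $\cA f_0 = \lambda_0 f_0 \in L_2(\Omega)$, and $\gamma_N f_0 = B \Tr f_0$. First I would show $\Tr f_0 \neq 0$: if $\Tr f_0 = 0$, then $f_0 \in H^1_0(\Omega)$ and $\cA f_0 = \lambda_0 f_0$ would give $f_0 \in \dom A_D$ with $A_D f_0 = \lambda_0 f_0$; but $\lambda_0 \in \rho(A_D)$ forces $f_0 = 0$, a contradiction. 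Next, since $f_0$ solves the homogeneous boundary value problem $(\cA - \lambda_0) f_0 = 0$ with $\Tr f_0 = \Tr f_0$, the definition of the Dirichlet-to-Neumann operator gives $D(\lambda_0) \Tr f_0 = \gamma_N f_0 = B \Tr f_0$.

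For part \ref{mainc-2}, let $\varphi_0 \in H^{1/2}(\Gamma) \setminus \{0\}$ with $D(\lambda_0) \varphi_0 = B \varphi_0$. By Lemma~\ref{sollem}\ref{sollem-1} there is a unique $f_0 \in H^1(\Omega)$ solving $(\cA - \lambda_0) f_0 = 0$ with $\Tr f_0 = \varphi_0$; and $f_0 \neq 0$ since its trace is nonzero. Moreover $\cA f_0 = \lambda_0 f_0 \in L_2(\Omega)$, so the co-normal derivative $\gamma_N f_0$ is defined and, by construction of $D(\lambda_0)$, equals $D(\lambda_0) \varphi_0 = B \varphi_0 = B \Tr f_0$. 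Hence $f_0 \in \dom A_B$ and $A_B f_0 = \cA f_0 = \lambda_0 f_0$, so $f_0$ is an eigenvector of $A_B$ at $\lambda_0$.

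There is no real obstacle here: the only step that is not a definition chase is the observation that $\Tr f_0 \neq 0$ in part~\ref{mainc-1}, which relies crucially on the hypothesis $\lambda_0 \in \rho(A_D)$. All remaining content is the defining property of $D(\lambda_0)$ and the description of $\dom A_B$ from Proposition~\ref{ljordan2-313}, together with the unique solvability of the homogeneous Dirichlet problem at $\lambda_0$ provided by Lemma~\ref{sollem}\ref{sollem-1}.
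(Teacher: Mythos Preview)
Your proof is correct and matches the paper's approach: the paper derives Corollary~\ref{maincor} as the $k=0$ case of Theorem~\ref{main}, and the $j=0$ step in that proof is exactly your direct argument (the same reasoning also appears, almost verbatim, in the introduction). The only ingredient beyond definition-chasing is, as you note, that $\Tr f_0=0$ would force $f_0\in\dom A_D$ and hence $f_0=0$ since $\lambda_0\in\rho(A_D)$.
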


\begin{corollary} \label{cjordanchain410}
Adopt the notation and assumptions as in Theorem~\ref{main}.
Then 
\[
\Tr(\ker (A_B - \lambda_0)) = \ker( D(\lambda_0) - B)
\] 
and $\Tr$ is a bijection from $\ker (A_B - \lambda_0)$ onto $\ker( D(\lambda_0) - B)$.
\end{corollary}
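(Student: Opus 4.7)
The plan is to deduce this corollary from Corollary~\ref{maincor} together with one additional observation that accounts for the injectivity. The set equality $\Tr(\ker(A_B-\lambda_0))=\ker(D(\lambda_0)-B)$ will follow by verifying both inclusions using the $k=0$ case, while the bijectivity will use the fact that $\lambda_0 \in \rho(A_D)$.

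First I would establish the inclusion $\Tr(\ker(A_B-\lambda_0)) \subseteq \ker(D(\lambda_0)-B)$. Take any $f_0 \in \ker(A_B - \lambda_0)$. If $f_0=0$, then $\Tr f_0 = 0 \in \ker(D(\lambda_0)-B)$. Otherwise $f_0$ is an eigenvector of $A_B$ at $\lambda_0$, and Corollary~\ref{maincor}\ref{mainc-1} gives $(D(\lambda_0)-B)\Tr f_0 = 0$, so $\Tr f_0 \in \ker(D(\lambda_0)-B)$.

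For the reverse inclusion, let $\varphi_0 \in \ker(D(\lambda_0)-B)$. If $\varphi_0 = 0$, it is the trace of $0 \in \ker(A_B-\lambda_0)$. If $\varphi_0 \neq 0$, Corollary~\ref{maincor}\ref{mainc-2} produces an $f_0 \in H^1(\Omega)$ with $\Tr f_0 = \varphi_0$ which is an eigenvector of $A_B$ at $\lambda_0$; in particular $f_0 \in \ker(A_B-\lambda_0)$, so $\varphi_0 \in \Tr(\ker(A_B-\lambda_0))$. This gives the set equality and simultaneously the surjectivity of the restricted map.

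The main (and in fact only substantive) step is injectivity of $\Tr$ restricted to $\ker(A_B-\lambda_0)$. Suppose $f \in \ker(A_B-\lambda_0)$ with $\Tr f = 0$. Then $f \in H^1(\Omega)$ satisfies $\cA f = \lambda_0 f \in L_2(\Omega)$, and $\Tr f = 0$ means $f \in H^1_0(\Omega)$. Hence $f \in \dom A_D$ with $(A_D - \lambda_0) f = 0$. Since $\lambda_0 \in \rho(A_D)$ this forces $f = 0$, which proves injectivity. Combining this with the surjectivity from the previous paragraph yields the claimed bijection. No step should pose a real obstacle; the only subtlety to watch is to handle the zero vector separately when invoking Corollary~\ref{maincor}, since that corollary is formulated for genuine eigenvectors (nonzero vectors).
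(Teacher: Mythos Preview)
Your proof is correct and follows exactly the approach implicit in the paper: the set equality and surjectivity are immediate from Corollary~\ref{maincor}, and the injectivity argument---that $\Tr f_0=0$ would force $f_0\in\dom A_D$ and hence $f_0=0$ since $\lambda_0\in\rho(A_D)$---is precisely the argument used in the proof of Theorem~\ref{main}\ref{main-1} to show $\varphi_0\neq 0$.
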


\begin{remark}
We can mention here that the assumption $\lambda_0\in\rho(A_D)$ in Theorem~\ref{main} 
and Corollary~\ref{maincor} is really needed.
In fact, one may define the Dirichlet-to-Neumann graph as a linear relation 
consisting of the Cauchy data for all $\lambda_0\in\sigma_p(A_D)$.
By \cite{Fil1} Theorem~1 there exist $\mu > 0$, $\lambda \in \Ri$, 
$u \in C_c^\infty(\Ri^3) \setminus \{ 0 \} $ and a H\"older continuous function
$g \colon \Ri^3 \to [\mu,\infty)$ such that $- \divv g \nabla u = \lambda u$.
Let $\Omega$ be a Lipschitz domain with $\supp u \subset \Omega$.
Choose $c_{kl} = g|_\Omega \, \delta_{kl}$, $b_k = c_k = c_0 = 0$
for all $k,l \in \{ 1,\ldots,d \} $ and $f_0 = u|_\Omega$.
Let $B \in \cL(L_2(\Gamma))$.
Then $f_0$ is an eigenfunction of $A_B$ at $\lambda$.
But $\Tr f_0 = 0$.
So one cannot drop the assumption $\lambda_0 \in \rho(A_D)$ in 
Corollary~\ref{maincor}\ref{mainc-1}.
\end{remark}

Observe that the homogenenous and inhomogeneous boundary value problems in 
Theorem~\ref{main}\ref{main-2} and Corollary~\ref{maincor}\ref{mainc-2} 
admit unique solutions by Lemma~\ref{sollem}.
The proof of Theorem~\ref{main} requires quite some preparation.
The next lemma is particularly useful;
its proof is partly based on an argument that was given by V.A. Derkach for
symmetric and selfadjoint linear relations in Krein spaces; see also \cite{DerkachM3} Section 7.4.4.

\begin{lemma}\label{mainlem}
Let $A_B$ be the Robin realisation of $\cA$ in $L_2(\Omega)$ as 
in Proposition~\ref{ljordan2-313} and
let $\{f_0,\ldots,f_k\}$ be a Jordan chain of $A_B$ at $\lambda_0\in\rho(A_D)$.
For all $m \in \{ 0,\ldots,k \} $ define  
$\varphi_m=\Tr f_m \in H^{1/2}(\Gamma)$.
Let $\varphi\in H^{1/2}(\Gamma)$ and
let $g \in H^1(\Omega)$ be the unique solution of the adjoint problem
$(\widetilde \cA - \overline{\lambda_0}) g = 0$ such that 
$\Tr g =\varphi$. 
Then the following holds.
\begin{tabel}
\item \label{lem-1} 
If $j \in \{ 1,\ldots,k \} $, then
\begin{equation} \label{useful}
(f_{j-1},g)_{L_2(\Omega)}
=\langle D(\lambda_0)\varphi_j- B\varphi_j,\varphi\rangle_{H^{-1/2}(\Gamma) \times H^{1/2}(\Gamma)}.
\end{equation}
\item \label{lem-2}
If $j \in \{ 1,\ldots, k+1 \} $, then
\begin{equation} \label{iieq}
(f_{j-1},g)_{L_2(\Omega)}
= -\sum_{l=1}^j \frac{1}{l!} 
\langle D^{(l)}(\lambda_0)\varphi_{j-l},\varphi\rangle_{H^{-1/2}(\Gamma) \times H^{1/2}(\Gamma)}.
\end{equation}
\end{tabel}
\end{lemma}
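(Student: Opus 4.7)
The plan is to construct a single holomorphic function of $\lambda$ whose Taylor coefficients at $\lambda_0$ simultaneously encode both sides of \eqref{useful} and \eqref{iieq}. For $\mu \in \rho(A_D^*)$ let $g_\mu \in H^1(\Omega)$ be the unique solution of $(\widetilde \cA - \mu) g_\mu = 0$ with $\Tr g_\mu = \varphi$; this is well-defined by Lemma~\ref{sollem}\ref{sollem-1} applied to the adjoint form $\gota^*$, and by Lemma~\ref{ljordanchain201}\ref{ljordanchain201-1} the map $\mu \mapsto g_\mu$ is holomorphic on $\rho(A_D^*)$ with $g_{\overline{\lambda_0}} = g$. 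Expanding $g_{\overline{\lambda}} = \sum_n \overline{(\lambda - \lambda_0)^n}\, h_n$ near $\overline{\lambda_0}$ and using that the $L_2$-inner product is conjugate-linear in the second slot, the function $F_j(\lambda) := (f_j, g_{\overline{\lambda}})_{L_2(\Omega)}$ is holomorphic in $\lambda$ on $\rho(A_D)$ for each $j \in \{-1, 0, \ldots, k\}$, where we set $f_{-1} := 0$, so $F_{-1} \equiv 0$ and $F_{j-1}(\lambda_0) = (f_{j-1}, g)_{L_2(\Omega)}$.

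The central identity is a recursion obtained from Green's second identity \eqref{green2} applied to the pair $(f_j, g_{\overline{\lambda}})$. Using $\cA f_j = \lambda_0 f_j + f_{j-1}$, $\widetilde \cA g_{\overline{\lambda}} = \overline{\lambda}\, g_{\overline{\lambda}}$, $\gamma_N f_j = B \varphi_j$, $\Tr g_{\overline{\lambda}} = \varphi$, and the duality \eqref{eGreenSecond} in the form $\langle \varphi_j, \widetilde D(\overline{\lambda}) \varphi \rangle = \langle D(\lambda) \varphi_j, \varphi\rangle$, one arrives at
\[
F_{j-1}(\lambda) = (\lambda - \lambda_0)\, F_j(\lambda) + P_j(\lambda), \qquad P_l(\lambda) := \langle (D(\lambda) - B) \varphi_l, \varphi\rangle,
\]
valid for all $j \in \{0, \ldots, k\}$ and $\lambda \in \rho(A_D)$. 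Statement~\ref{lem-1} is then immediate: evaluate the recursion at $\lambda = \lambda_0$ and the first term on the right vanishes, yielding $(f_{j-1}, g)_{L_2(\Omega)} = F_{j-1}(\lambda_0) = P_j(\lambda_0) = \langle (D(\lambda_0) - B)\varphi_j, \varphi\rangle$ for all $j \in \{1, \ldots, k\}$.

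For Statement~\ref{lem-2}, iterate the recursion forward from $F_{-1} \equiv 0$. A straightforward induction yields
\[
F_j(\lambda) = - \sum_{l=0}^j \frac{P_l(\lambda)}{(\lambda - \lambda_0)^{j-l+1}} \qquad \text{on } \rho(A_D) \setminus \{\lambda_0\}
\]
for all $j \in \{0, \ldots, k\}$. Since $F_j$ extends holomorphically to $\lambda_0$ by construction, the principal part of the Laurent expansion of the right-hand side must vanish identically, and $F_j(\lambda_0)$ equals its constant term. Taylor-expanding $P_l$ at $\lambda_0$ and picking the coefficient of $(\lambda - \lambda_0)^{j-l+1}$, which equals $\tfrac{1}{(j-l+1)!}\langle D^{(j-l+1)}(\lambda_0) \varphi_l, \varphi\rangle$ (since $B$ is $\lambda$-independent and $j-l+1 \geq 1$), then substituting $s = j - l + 1$ and relabelling $j' = j + 1 \in \{1, \ldots, k+1\}$ yields exactly formula \eqref{iieq}.

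The main conceptual obstacle is the bookkeeping of complex conjugation: the adjoint solution $g_\mu$ is naturally holomorphic in its parameter $\mu$, but after pairing with $f_j$ it is the reflected parameter $\lambda = \overline{\mu}$ that resonates with the spectral parameter of the Jordan chain, and one must verify both that $F_j$ is genuinely holomorphic in $\lambda$ and that the Dirichlet-to-Neumann pairing on the boundary comes out as $D(\lambda)$ rather than $\widetilde D(\overline{\lambda})$ via \eqref{eGreenSecond}. Once the recursion $F_{j-1} = (\lambda - \lambda_0) F_j + P_j$ is in hand, the rest reduces to elementary manipulations of Taylor and Laurent series.
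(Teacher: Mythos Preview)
Your proof is correct and rests on the same core identity as the paper: both derive, via Green's second identity \eqref{green2} and the duality \eqref{eGreenSecond}, the relation
\[
(A_B f_j, g_{\overline\lambda})_{L_2(\Omega)} - (f_j, \overline\lambda\, g_{\overline\lambda})_{L_2(\Omega)}
= \langle (D(\lambda)-B)\varphi_j, \varphi\rangle_{H^{-1/2}(\Gamma)\times H^{1/2}(\Gamma)},
\]
which is exactly your recursion $F_{j-1}(\lambda) = (\lambda-\lambda_0)F_j(\lambda) + P_j(\lambda)$ after using $A_B f_j = \lambda_0 f_j + f_{j-1}$. Part~\ref{lem-1} is then obtained identically in both arguments by evaluation at $\lambda_0$. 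The difference lies in part~\ref{lem-2}: the paper proves by induction on $j$ the intermediate formula
\[
-(f_{j-1}, g_{\overline\lambda})_{L_2(\Omega)}
= \sum_{l=1}^j \Bigl\langle \frac{1}{(\lambda-\lambda_0)^l}\Bigl(D(\lambda) - \sum_{s=0}^{l-1}\tfrac{1}{s!}(\lambda-\lambda_0)^s D^{(s)}(\lambda_0)\Bigr)\varphi_{j-l},\varphi\Bigr\rangle,
\]
in which Taylor polynomials are subtracted so that each summand individually has a limit as $\lambda\to\lambda_0$, and then passes to that limit. You instead iterate the recursion in closed form to $F_j(\lambda) = -\sum_{l=0}^j (\lambda-\lambda_0)^{-(j-l+1)} P_l(\lambda)$ and exploit the a priori holomorphy of $F_j$ to read off $F_j(\lambda_0)$ as the constant Laurent coefficient. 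Your packaging is more economical and avoids the somewhat intricate induction step; the paper's version has the minor advantage that every term is manifestly regular at $\lambda_0$, so no appeal to cancellation of principal parts is needed.
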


\begin{proof}
For all $\lambda \in \rho(A_D)$ let 
$g_{\overline \lambda}\in H^1(\Omega)$ be the unique solution of the adjoint problem
$(\widetilde \cA - \overline \lambda) g_{\overline \lambda} = 0$ such that 
$\Tr g_{\overline \lambda}=\varphi$; see Lemma~\ref{sollem}\ref{sollem-1}.
Then $g_{\overline{\lambda_0}} = g$.
We set $f_{-1} = 0$.

`\ref{lem-1}'.
If $j \in \{ 0,\ldots,k \} $ and $\lambda \in \rho(A_D)$, then
$f_j \in \dom A_B$,
so $A_B f_j = \cA f_j$ and $\gamma_N f_j = B \Tr f_j$ by Proposition~\ref{ljordan2-313}.
Therefore 
\begin{eqnarray}
\lefteqn{
(A_B f_j,g_{\overline \lambda})_{L_2(\Omega)}
   - (f_j,\overline \lambda g_{\overline \lambda})_{L_2(\Omega)}
} \hspace*{30mm} \nonumber \\*
& = & (\cA f_j,g_{\overline \lambda})_{L_2(\Omega)}
   - (f_j,\widetilde\cA g_{\overline \lambda})_{L_2(\Omega)}  \nonumber\\
& = & \langle \Tr f_j,\widetilde\gamma_N g_{\overline \lambda}\rangle_{H^{1/2}(\Gamma) \times H^{-1/2}(\Gamma)}
   - \langle\gamma_N f_j,\Tr g_{\overline \lambda}\rangle_{H^{-1/2}(\Gamma) \times H^{1/2}(\Gamma)}  \nonumber\\
& = &\langle\Tr f_j,\widetilde D(\overline \lambda)\Tr g_{\overline \lambda}\rangle_{H^{1/2}(\Gamma) \times H^{-1/2}(\Gamma)}
   - \langle B \Tr f_j,\Tr g_{\overline \lambda}\rangle_{H^{-1/2}(\Gamma) \times H^{1/2}(\Gamma)}  \nonumber\\
& = & \bigl\langle D(\lambda) \varphi_j - B \varphi_j,\varphi\bigr\rangle_{H^{-1/2}(\Gamma) \times H^{1/2}(\Gamma)}
,
\label{elmainlem;1}
\end{eqnarray}
where we used (\ref{eGreenSecond}) in the last step.
Choosing $\lambda = \lambda_0$ gives
\begin{eqnarray*}
(f_{j-1},g)_{L_2(\Omega)}
& = & \bigl((A_B-\lambda_0)f_j,g_{\overline{\lambda_0}}\bigr)_{L_2(\Omega)}  \\
& = & (A_B f_j,g_{\overline{\lambda_0}})_{L_2(\Omega)}
   - (f_j,\overline{\lambda_0} g_{\overline{\lambda_0}})_{L_2(\Omega)}
= \bigl\langle D(\lambda_0) \varphi_j - B \varphi_j,\varphi\bigr\rangle_{H^{-1/2}(\Gamma) \times H^{1/2}(\Gamma)}
,   
\end{eqnarray*}
which proves (\ref{useful}).
Note that $j = 0$ gives 
$\langle D(\lambda_0) \varphi_0 - B \varphi_0,\varphi\rangle_{H^{-1/2}(\Gamma) \times H^{1/2}(\Gamma)} = 0$
and hence
\begin{equation}\label{neweq}
D(\lambda_0) \varphi_0 = B \varphi_0.
\end{equation}

`\ref{lem-2}'.
We shall show that  
\begin{equation}\label{formula}
- (f_{j-1},g_{\overline \lambda})_{L_2(\Omega)}
=\sum_{l=1}^j\left\langle\frac{1}{(\lambda-\lambda_0)^l}
\left(D(\lambda) - \sum_{s=0}^{l-1}\frac{1}{s!} (\lambda-\lambda_0)^s D^{(s)}(\lambda_0) \right)
\varphi_{j-l}, \varphi\right\rangle_{H^{-1/2}(\Gamma) \times H^{1/2}(\Gamma)}
\end{equation}
for all $j \in \{ 1,\ldots, k+1 \} $ and $\lambda \in \rho(A_D) \setminus \{ \lambda_0 \} $.
Once we have shown this, then the equality \eqref{iieq} easily follows by taking the 
limit $\lambda \to \lambda_0$.
In fact, the left hand side of \eqref{formula} tends to 
$-(f_{j-1},g_{\overline \lambda_0})_{L_2(\Omega)} = -(f_{j-1},g)_{L_2(\Omega)}$ by 
Lemma~\ref{ljordanchain201}\ref{ljordanchain201-1},
and using the Taylor expansion 
\[
D(\lambda)
=\sum_{s=0}^\infty \frac{1}{s!} (\lambda-\lambda_0)^s D^{(s)}(\lambda_0)
\]
it is easy to see that for $\lambda \to \lambda_0$ the right hand side in \eqref{formula} tends to 
\[
\sum_{l=1}^j \frac{1}{l!} \langle D^{(l)}(\lambda_0)\varphi_{j-l},\varphi\rangle_{H^{-1/2}(\Gamma) \times H^{1/2}(\Gamma)}.
\]

We prove formula \eqref{formula} by induction.
If $j=1$ and $\lambda \in \rho(A_D) \setminus \{ \lambda_0 \} $,
then (\ref{elmainlem;1}) gives
\begin{eqnarray*}
- (\lambda-\lambda_0)_{L_2(\Omega)} \, (f_0,g_{\overline \lambda})_{L_2(\Omega)}
& = & (\lambda_0 f_0,g_{\overline \lambda})_{L_2(\Omega)}
     -(f_0,\overline \lambda g_{\overline \lambda})_{L_2(\Omega)}  \\
& = & (A_B f_0,g_{\overline \lambda})_{L_2(\Omega)}
      -(f_0,\overline \lambda g_{\overline \lambda})_{L_2(\Omega)}  \\
& = & \bigl\langle D(\lambda) \varphi_0 - B \varphi_0,\varphi\bigr\rangle_{H^{-1/2}(\Gamma) \times H^{1/2}(\Gamma)} \\
& = & \bigl\langle (D(\lambda)-D(\lambda_0))\varphi_0,\varphi\bigr\rangle_{H^{-1/2}(\Gamma) \times H^{1/2}(\Gamma)}
,
\end{eqnarray*}
where we used \eqref{neweq} in the last step.
So \eqref{formula} is valid if $j = 1$.

Let $m \in \{ 1,\ldots,k \} $ and suppose that \eqref{formula} is valid for $j=m$.
Then by taking the limit $\lambda \to \lambda_0$ one deduces that 
\[
- (f_{m-1},g)_{L_2(\Omega)}
= \sum_{l=1}^m \frac{1}{l!} \langle D^{(l)}(\lambda_0)\varphi_{m-l},\varphi\rangle_{H^{-1/2}(\Gamma) \times H^{1/2}(\Gamma)},
\]
and together with \eqref{useful} we conclude
\begin{equation} \label{veryuseful}
 \bigl\langle D(\lambda_0) \varphi_m - B \varphi_m,\varphi\bigr\rangle_{H^{-1/2}(\Gamma) \times H^{1/2}(\Gamma)}
=-\sum_{l=1}^m\frac{1}{l!}\langle D^{(l)}(\lambda_0)\varphi_{m-l},
       \varphi\rangle_{H^{-1/2}(\Gamma) \times H^{1/2}(\Gamma)}
 .
\end{equation}
Now let us prove the formula \eqref{formula} for $j=m+1$.
Let $\lambda \in \rho(A_D) \setminus \{ \lambda_0 \}$.
Then a simple computation shows
\begin{eqnarray*}
\lefteqn{
\sum_{l=1}^{m+1}\frac{1}{(\lambda-\lambda_0)^l} 
  \left(D(\lambda) - \sum_{s=0}^{l-1}\frac{1}{s!} (\lambda-\lambda_0)^s D^{(s)}(\lambda_0) \right)
     \varphi_{m+1-l} 
} \hspace*{20mm}  \\*
& = & \sum_{l=2}^{m+1}\frac{1}{(\lambda-\lambda_0)^l}
    \left(D(\lambda) - \sum_{s=0}^{l-1}\frac{1}{s!} (\lambda-\lambda_0)^s D^{(s)}(\lambda_0) \right)
  \varphi_{m+1-l}\\*
& & \hspace*{10mm} {} 
    + \frac{D(\lambda)-D(\lambda_0)}{\lambda-\lambda_0} \varphi_m  \\ 
& = & \sum_{l=1}^m\frac{1}{(\lambda-\lambda_0)^{l+1}}
 \left(D(\lambda) - \sum_{s=0}^{l}\frac{1}{s!}
     (\lambda-\lambda_0)^s D^{(s)}(\lambda_0) \right) \varphi_{m-l}  \\*
& & \hspace*{10mm} {} 
   + \frac{D(\lambda)-D(\lambda_0)}{\lambda-\lambda_0}\varphi_m  \\
& = &  \frac{1}{\lambda-\lambda_0}\sum_{l=1}^m\frac{1}{(\lambda-\lambda_0)^{l}}
 \left(D(\lambda) - \sum_{s=0}^{l-1}\frac{1}{s!}
     (\lambda-\lambda_0)^s D^{(s)}(\lambda_0) \right) \varphi_{m-l}\\*
& & \hspace*{10mm} {} 
    - \frac{1}{\lambda-\lambda_0}\sum_{l=1}^m
  \frac{1}{l!}D^{(l)}(\lambda_0)\varphi_{m-l}
    + \frac{D(\lambda)-D(\lambda_0)}{\lambda-\lambda_0}\varphi_m
\end{eqnarray*}
and using \eqref{formula} for $j=m$ for the first term on the right hand side, 
and \eqref{veryuseful} for the second term on the right hand side
gives
\begin{eqnarray*}
\lefteqn{
\sum_{l=1}^{m+1}\left\langle\frac{1}{(\lambda-\lambda_0)^l} 
  \left(D(\lambda) - \sum_{s=0}^{l-1}\frac{1}{s!} (\lambda-\lambda_0)^s D^{(s)}(\lambda_0) \right)
     \varphi_{m+1-l},\varphi\right\rangle_{H^{-1/2}(\Gamma) \times H^{1/2}(\Gamma)} 
} \hspace*{20mm} \\*
& = & - \frac{1}{\lambda-\lambda_0}(f_{m-1},g_{\overline \lambda})_{L_2(\Omega)}
   +\frac{1}{\lambda-\lambda_0}
        \bigl\langle D(\lambda_0) \varphi_m - B \varphi_m,\varphi\bigr\rangle_{H^{-1/2}(\Gamma) \times H^{1/2}(\Gamma)} \\*
& & \hspace*{10mm} {}
   + \frac{1}{\lambda-\lambda_0}
  \bigl\langle D(\lambda) \varphi_m - D(\lambda_0) \varphi_m,\varphi\bigr\rangle_{H^{-1/2}(\Gamma) \times H^{1/2}(\Gamma)}\\*
& = & \frac{1}{\lambda-\lambda_0}
    \bigl\langle D(\lambda) \varphi_m - B \varphi_m,\varphi\bigr\rangle_{H^{-1/2}(\Gamma) \times H^{1/2}(\Gamma)}
  - \frac{1}{\lambda-\lambda_0}(f_{m-1},g_{\overline \lambda})_{L_2(\Omega)}\\
& = & \frac{1}{\lambda-\lambda_0}
    \bigl((A_B f_m,g_{\overline \lambda})_{L_2(\Omega)}
          - (f_m,\overline \lambda g_{\overline \lambda})_{L_2(\Omega)}
          - (f_{m-1},g_{\overline \lambda})\bigr)_{L_2(\Omega)}  \\
& = & \frac{1}{\lambda-\lambda_0}
    \bigl(( f_{m-1} + \lambda_0 \, f_m ,g_{\overline \lambda})_{L_2(\Omega)}
          - (f_m,\overline \lambda g_{\overline \lambda})_{L_2(\Omega)}
     -(f_{m-1},g_{\overline \lambda})\bigr)_{L_2(\Omega)}  \\
& = & -(f_m, g_{\overline \lambda})_{L_2(\Omega)},
\end{eqnarray*}
where \eqref{elmainlem;1} was used for $j=m$ in third equality and 
$(A_B-\lambda_0) f_m=f_{m-1}$ was used in the fourth equality.
We have shown \eqref{formula} for $j=m+1$.
The proof of \ref{lem-2} is complete.
\end{proof}

Now we are able to prove the main theorem.

\begin{proof}[{\bf Proof of Theorem~\ref{main}}]
`\ref{main-1}'. 
Let $\{f_0,\ldots,f_k\}$ form a Jordan chain for $A_B$ at $\lambda_0\in\rho(A_D)$ and let 
$\varphi_j = \Tr f_j\in H^{1/2}(\Gamma)$ for all $j\in\{1,\ldots,k\}$
be the corresponding traces.
We have to prove that
\begin{equation}\label{goal}
 \sum_{l=0}^j\frac{1}{l!} D^{(l)}(\lambda_0)\varphi_{j-l}=B\varphi_j
\end{equation}
for all $j\in\{0,\ldots,k\}$ and that $\varphi_0\not=0$.

Using Proposition~\ref{ljordan2-313} it is easy to see that 
\begin{equation*}
 D(\lambda_0)\varphi_0-B\varphi_0=D(\lambda_0)\Tr f_0-B\Tr f_0=\gamma_N f_0-\gamma_N f_0=0
\end{equation*}
and hence \eqref{goal} is valid if $j=0$.
Furthermore, 
$\varphi_0=\Tr f_0\not=0$ as otherwise 
$f_0\in\dom A_D$ and therefore
$(A_D-\lambda_0)f_0=(A_B-\lambda_0)f_0=0$,
which together with $\lambda_0\in\rho(A_D)$ would imply $f_0=0$.

Let $j \in \{ 1,\ldots,k \} $ and let $\varphi \in H^{1/2}(\Gamma)$.
Then Lemma~\ref{mainlem} gives
\[
\bigl\langle D(\lambda_0)\varphi_j-B\varphi_j,
         \varphi\bigr\rangle_{H^{-1/2}(\Gamma) \times H^{1/2}(\Gamma)}
= -\sum_{l=1}^j \frac{1}{l!} 
   \langle D^{(l)}(\lambda_0)\varphi_{j-l},\varphi\rangle_{H^{-1/2}(\Gamma) \times H^{1/2}(\Gamma)} .
\]
This implies that 
\begin{equation*}
 B\varphi_j=D(\lambda_0)\varphi_j+\sum_{l=1}^j\frac{1}{l!}D^{(l)}(\lambda_0)\varphi_{j-l}
 =\sum_{l=0}^j\frac{1}{l!}D^{(l)}(\lambda_0)\varphi_{j-l}
\end{equation*}
as required.

`\ref{main-2}'.
Assume that $\{\varphi_0,\ldots,\varphi_k\}$ 
form a Jordan chain of the function 
$\lambda\mapsto D(\lambda)-B$ at $\lambda_0$, that is, \eqref{goal}
is valid for all $j \in \{ 0,1,\ldots k \} $ and $\varphi_0\not=0$.
In the following we construct a Jordan chain $\{f_0,\ldots ,f_k\}$ of $A_B$ at 
$\lambda_0$ such that the corresponding
 traces are given by the set of vectors $\{\varphi_0,\ldots,\varphi_k\}$.
We proceed by induction.
According to Lemma~\ref{sollem}\ref{sollem-1} there exists a unique $f_0 \in H^1(\Omega)$ 
such that $(\cA - \lambda_0) f_0 = 0$ and 
 $\Tr f_0=\varphi_0$.
Making use of \eqref{goal} for $j=0$
 we obtain
 \begin{equation*}
  \gamma_N f_0
=D(\lambda_0)\Tr f_0
=D(\lambda_0)\varphi_0
= B\varphi_0=B\Tr f_0
 \end{equation*}
and hence $f_0\in\dom A_B $ with $(A_B-\lambda_0)f_0=0$ by
Proposition~\ref{ljordan2-313}.
Since $\varphi_0\not=0$ it is clear that also $f_0\not=0$.

Now let $m \in \{ 1,\ldots,k \} $ and assume that there are 
$f_0,\ldots,f_{m-1} \in H^1(\Omega)$ such that 
$\varphi_j = \Tr f_j$ for all $j \in \{ 0,\ldots,m-1 \} $ and 
the vectors $\{f_0,\ldots,f_{m-1}\}$ form a Jordan chain for $A_B$ at $\lambda_0$.
By Lemma \ref{sollem}\ref{sollem-2} 
there exists a unique vector $f_m \in H^1(\Omega)$ 
such that 
\begin{equation}\label{solbvp}
(\cA-\lambda_0)f_m = f_{m-1} \quad\text{and}\quad \Tr f_m = \varphi_m.
\end{equation}
We shall prove that $\gamma_N f_m = B \Tr f_m$.
Once we proved that, it follows that $f_m \in \dom A_B$ and 
$(A_B - \lambda_0) f_m = f_{m-1}$.

By assumption and \eqref{solbvp} one deduces that
\[
D(\lambda_0) \Tr f_m=
D(\lambda_0) \varphi_m 
= B \varphi_m - \sum_{l=1}^m\frac{1}{l!} D^{(l)}(\lambda_0)\varphi_{m-l}= B \Tr f_m - \sum_{l=1}^m\frac{1}{l!} D^{(l)}(\lambda_0)\varphi_{m-l}
.  \]
Let $\varphi\in H^{1/2}(\Gamma)$.
By Lemma~\ref{sollem}\ref{sollem-1} there exists a unique
$g\in H^1(\Omega)$ such that
$(\widetilde \cA - \overline{\lambda_0}) g = 0$ and
$\Tr g =\varphi$. 
Then
\begin{eqnarray*}
((\cA-\lambda_0)f_m,g)_{L_2(\Omega)}
&=&(\cA f_m,g)_{L_2(\Omega)}
   -(f_m,\overline \lambda_0 g)_{L_2(\Omega)}\\
&=&(\cA f_m,g)_{L_2(\Omega)} - (f_m,\widetilde\cA g)_{L_2(\Omega)}  \\
&=&\langle\Tr f_m,\widetilde\gamma_N g\rangle_{H^{1/2}(\Gamma) \times H^{-1/2}(\Gamma)}
   -\langle\gamma_N f_m,\Tr g\rangle_{H^{-1/2}(\Gamma) \times H^{1/2}(\Gamma)} \\
&=&\langle\Tr f_m,\widetilde D(\overline \lambda_0)\Tr g\rangle_{H^{1/2}(\Gamma) \times H^{-1/2}(\Gamma)}
   - \langle\gamma_N f_m,\Tr g\rangle_{H^{-1/2}(\Gamma) \times H^{1/2}(\Gamma)}  \\
&=&\bigl\langle D(\lambda_0) \Tr f_m -\gamma_N f_m,
             \varphi\bigr\rangle_{H^{-1/2}(\Gamma) \times H^{1/2}(\Gamma)}\\
&=&\left\langle B \Tr f_m  -\gamma_N f_m 
     - \sum_{l=1}^m\frac{1}{l!} D^{(l)}(\lambda_0)\varphi_{m-l},\varphi\right\rangle_{H^{-1/2}(\Gamma) \times H^{1/2}(\Gamma)}
.
\end{eqnarray*}
On the other hand,
as $\{f_0,\ldots,f_{m-1}\}$ is a Jordan chain of $A_B$ at $\lambda_0$ we have
\begin{eqnarray*}
 ((\cA-\lambda_0)f_m,g)_{L_2(\Omega)}
= (f_{m-1},g)_{L_2(\Omega)}
& = & - \sum_{l=1}^m\frac{1}{l!} 
     \langle D^{(l)}(\lambda_0)\varphi_{m-l},\varphi\rangle_{H^{-1/2}(\Gamma) \times H^{1/2}(\Gamma)}
\end{eqnarray*}
by Lemma~\ref{mainlem}\ref{lem-2}.
Therefore 
$\langle B \Tr f_m  -\gamma_N f_m, \varphi\rangle_{H^{-1/2}(\Gamma) \times H^{1/2}(\Gamma)} = 0$ 
for all $\varphi\in H^{1/2}(\Gamma)$.
Thus $\gamma_N f_m = B \Tr f_m$
as required.
So $ \{ f_0,\ldots,f_m \} $ is a Jordan chain for $A_B$ at $\lambda_0$ 
with traces $ \{ \varphi_0,\ldots,\varphi_m \} $.
\end{proof}

\begin{remark}
In the abstract setting of boundary triplets and their Weyl functions for 
adjoint pairs \cite{LyantzeStorozh, MalamudMog, Vainerman}
it is known under a natural unique continuation hypothesis that
the poles of the Weyl function correspond to the isolated eigenvalues of the 
fixed extension, see \cite[Theorem 4.4]{BrownMarlettaNW}.
See also \cite{BrownMarlettaNW2, BrownHMNW, BorogovacLuger}
for related results in the context of indefinite inner product spaces. 
\end{remark}

\section{Variations} \label{Sjordanchain5}

In the previous section we considered the Dirichlet-to-Neumann operator
$D(\lambda) \colon H^{1/2}(\Gamma) \to H^{-1/2}(\Gamma)$ and the 
Jordan chain with respect to the holomorphic operator function 
$\lambda \mapsto D(\lambda) - B$ from $\rho(A_D)$ into 
$\cL(H^{1/2}(\Gamma),H^{-1/2}(\Gamma))$, where $B\in\cL(H^{1/2}(\Gamma),H^{-1/2}(\Gamma))$ 
satisfies \eqref{bsemi}.

Except from the obvious ellipticity condition and to have a Lipschitz domain,
there were no conditions on the coefficients: merely bounded measurable and complex valued.

There are two other Dirichlet-to-Neumann operators that we consider in this section.

\subsection{$C^2$-domains} \label{Subjordanchain5.1}

Throughout this subsection we suppose that $\Omega$ is a $C^2$-domain, 
$c_{kl} \in C^1(\overline \Omega)$ and $b_k = 0$ for all $k,l \in \{ 1,\ldots,d \} $.
We summarise some regularity results that we need in this subsection.

\begin{lemma} \label{ljordanchain520}
\mbox{}
\begin{tabel} 
\item \label{ljordanchain520-1}
If $f \in H^2(\Omega)$, then $\Tr f \in H^{3/2}(\Gamma)$,
$\cA f \in L_2(\Omega)$ and 
\[
\gamma_N f = \sum_{k,l=1}^d \nu_k \Tr( c_{kl} \, \partial_l f)
\in H^{1/2}(\Gamma)
.  \]
Moreover, the map $f \mapsto \gamma_N f$ is continuous from 
$H^2(\Omega)$ into $H^{1/2}(\Gamma)$.
\item \label{ljordanchain520-2}
Let $\lambda \in \rho(A_D)$.
For all $\varphi \in H^{3/2}(\Gamma)$ there exists a unique 
$f \in H^2(\Omega)$ such that $(\cA - \lambda) f = 0$ and $\Tr f = \varphi$.
Moreover, the map $\varphi \mapsto f$ is continuous from $H^{3/2}(\Gamma)$
into $H^2(\Omega)$.
\item \label{ljordanchain520-3}
Let $\lambda \in \rho(A_D)$.
For all $h \in L_2(\Omega)$ and $\varphi \in H^{3/2}(\Gamma)$ there exists a unique 
$f \in H^2(\Omega)$ such that $(\cA - \lambda) f = h$ and $\Tr f = \varphi$.
\end{tabel}
\end{lemma}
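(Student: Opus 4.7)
The plan is to prove the three statements in order, exploiting the improved regularity of the coefficients and the boundary to upgrade the results from Section~\ref{Sjordanchain2}. For statement~\ref{ljordanchain520-1} the standard trace theorem for $C^2$-domains gives $\Tr f\in H^{3/2}(\Gamma)$ for $f\in H^2(\Omega)$. Since $c_{kl}\in C^1(\overline\Omega)$ and $b_k=0$, the product rule applied pointwise yields
\[
\cA f = -\sum_{k,l=1}^d c_{kl}\,\partial_k\partial_l f - \sum_{k,l=1}^d(\partial_k c_{kl})\,\partial_l f + \sum_{k=1}^d c_k\,\partial_k f + c_0 f,
\]
which is in $L_2(\Omega)$ with norm bounded by $C\|f\|_{H^2(\Omega)}$. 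For the formula for $\gamma_N f$, I would observe that $c_{kl}\,\partial_l f\in H^1(\Omega)$ (since $c_{kl}\in C^1(\overline\Omega)$ and $\partial_l f\in H^1(\Omega)$), so the divergence theorem on the $C^2$-domain $\Omega$ applies to each vector field with components $(c_{kl}\,\partial_l f)\,\overline g$ for $g\in H^1(\Omega)$, giving
\[
\gota(f,g) - (\cA f,g)_{L_2(\Omega)} = \sum_{k,l=1}^d\int_\Gamma \nu_k\,\Tr(c_{kl}\,\partial_l f)\,\overline{\Tr g}\,d\sigma.
\]
By the uniqueness assertion in Lemma~\ref{ljordanchain310}, the element $\sum_{k,l}\nu_k\,\Tr(c_{kl}\,\partial_l f)\in H^{1/2}(\Gamma)$ must equal $\gamma_N f$. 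Continuity of $f\mapsto\gamma_N f$ from $H^2(\Omega)$ into $H^{1/2}(\Gamma)$ then follows from continuity of multiplication by $C^1$-functions on $H^1(\Omega)$ and of the trace $H^1(\Omega)\to H^{1/2}(\Gamma)$.

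For statement~\ref{ljordanchain520-2}, uniqueness is immediate from Lemma~\ref{sollem}\ref{sollem-1} since $H^2(\Omega)\subset H^1(\Omega)$. For existence, I would lift $\varphi\in H^{3/2}(\Gamma)$ to some $f_0\in H^2(\Omega)$ with $\Tr f_0=\varphi$ using a bounded right inverse of the trace at the $H^2$/$H^{3/2}$ level (which exists for $C^2$-domains), set $h_0:=(\cA-\lambda)f_0\in L_2(\Omega)$ by statement~\ref{ljordanchain520-1}, and invoke Lemma~\ref{sollem}\ref{sollem-2} to solve $(\cA-\lambda)u=-h_0$ with $u\in H^1_0(\Omega)\cap\dom A_D$. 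Then I would invoke classical $H^2$-regularity up to the boundary for the Dirichlet realisation of second-order elliptic operators with $C^1$ principal coefficients (and $L_\infty$ lower-order coefficients) on a $C^2$-domain to conclude $u\in H^2(\Omega)$, so that $f:=f_0+u$ solves the homogeneous boundary value problem in $H^2(\Omega)$. Continuity of $\varphi\mapsto f$ from $H^{3/2}(\Gamma)$ into $H^2(\Omega)$ will then follow from the closed graph theorem once the map is seen to be everywhere defined, using that the embedding $H^2(\Omega)\hookrightarrow H^1(\Omega)$ is continuous and that $\varphi\mapsto f$ is already continuous into $H^1(\Omega)$ by Lemma~\ref{sollem}\ref{sollem-1}.

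Statement~\ref{ljordanchain520-3} is obtained by the same reduction: given $\varphi\in H^{3/2}(\Gamma)$ and $h\in L_2(\Omega)$, lift $\varphi$ to $f_0\in H^2(\Omega)$, form $\widetilde h:=h-(\cA-\lambda)f_0\in L_2(\Omega)$, and solve the Dirichlet problem $(\cA-\lambda)u=\widetilde h$ with $u\in H^1_0(\Omega)$; the same $H^2$-regularity result gives $u\in H^2(\Omega)$, and $f:=f_0+u$ is the required solution, uniqueness again being a consequence of Lemma~\ref{sollem}\ref{sollem-2}.

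The main obstacle is the classical $H^2$-regularity statement for the Dirichlet problem on a $C^2$-domain with $C^1$ principal coefficients. Its proof proceeds by localisation and flattening the boundary using a $C^2$-chart, freezing the principal coefficients, and running Nirenberg's difference-quotient method in tangential directions to obtain tangential $H^2$-bounds; the missing normal second derivative is then recovered algebraically from the equation, where the ellipticity of the principal part (together with its $C^1$-regularity, which controls the commutators with derivatives) is crucial. I would simply cite a standard reference (Grisvard, Gilbarg--Trudinger, or McLean) rather than reproduce this argument.
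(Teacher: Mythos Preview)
Your proposal is correct and follows essentially the same route as the paper: part~\ref{ljordanchain520-1} via the $H^2$-trace theorem and the divergence theorem, parts~\ref{ljordanchain520-2}--\ref{ljordanchain520-3} by lifting the boundary datum to $H^2(\Omega)$, reducing to a homogeneous Dirichlet problem, and invoking classical $H^2$-regularity (the paper cites Grisvard and Evans where you suggest Grisvard, Gilbarg--Trudinger, or McLean), with continuity via the closed graph theorem. You supply considerably more detail than the paper does, but the strategy is identical.
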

\begin{proof}
`\ref{ljordanchain520-1}'.
This follows from \cite{Gris} Theorem~1.5.1.2 and the divergence theorem.

`\ref{ljordanchain520-2}'.
By \cite{Gris} Theorem~1.5.1.2 there exists an $f_0 \in H^2(\Omega)$ such that 
$\Tr f_0 = \varphi$.
Then it follows \cite{Evans} Theorem~6.3.4 that there exists a unique
$h \in H^2(\Omega)$ such that $(\cA - \lambda) h = (\cA - \lambda) f_0$
and $\Tr h = 0$.
Therefore $f = f_0 - h$ satisfies the requirements.
The uniqueness is easy.
The continuity follows from Lemma~\ref{sollem}\ref{sollem-1} and the 
closed graph theorem. 

`\ref{ljordanchain520-3}'.
This can be proved similarly.
\end{proof}

For all $\lambda \in \rho(A_D)$ define the Dirichlet-to-Neumann operator
$\widehat D(\lambda) \colon H^{3/2}(\Gamma) \to H^{1/2}(\Gamma)$ as follows.
Let $\varphi \in H^{3/2}(\Gamma)$.
By Lemma~\ref{ljordanchain520}\ref{ljordanchain520-2} there exists a unique $f \in H^2(\Omega)$
such that $(\cA - \lambda) f = 0$ and $\Tr f = \varphi$.
Define $\widehat D(\lambda) \varphi = \gamma_N f \in H^{1/2}(\Gamma)$ by 
Lemma~\ref{ljordanchain520}\ref{ljordanchain520-1}.
Then $\widehat D(\lambda)$ is a bounded operator.

Next we consider holomorphy.

\begin{lemma} \label{ljordanchain521}
The map $\lambda \mapsto \widehat D(\lambda)$ from $\rho(A_D)$ 
into $\cL(H^{3/2}(\Gamma), H^{1/2}(\Gamma))$ is holomorphic.
\end{lemma}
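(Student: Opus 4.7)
The plan is to mimic the proof of Lemma~\ref{ljordanchain201}\ref{ljordanchain201-2}, but carried out at the level of $H^2$-regularity. Fix $\lambda_0 \in \rho(A_D)$ and $\varphi \in H^{3/2}(\Gamma)$. By Lemma~\ref{ljordanchain520}\ref{ljordanchain520-2} there is a unique $g_{\lambda_0} \in H^2(\Omega)$ with $(\cA - \lambda_0) g_{\lambda_0} = 0$ and $\Tr g_{\lambda_0} = \varphi$. For any $\lambda \in \rho(A_D)$ I define
\[
g_\lambda = g_{\lambda_0} + (\lambda - \lambda_0)(A_D - \lambda)^{-1} g_{\lambda_0}.
\]
Exactly as in the proof of Lemma~\ref{ljordanchain201}\ref{ljordanchain201-1}, one checks $(\cA - \lambda) g_\lambda = 0$ and $\Tr g_\lambda = \varphi$, and by the uniqueness part of Lemma~\ref{ljordanchain520}\ref{ljordanchain520-2} this $g_\lambda$ is precisely the $H^2$-solution used to define $\widehat D(\lambda)\varphi = \gamma_N g_\lambda$.

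The key intermediate step will be to upgrade resolvent holomorphy to an $H^2$-valued statement: the map $\lambda \mapsto (A_D - \lambda)^{-1}$ is holomorphic from $\rho(A_D)$ into $\cL(L_2(\Omega), H^2(\Omega))$. Once this is established, the formula displayed above shows that $\lambda \mapsto g_\lambda$ is holomorphic from $\rho(A_D)$ into $H^2(\Omega)$, and even more: the solution operator $S(\lambda) \colon H^{3/2}(\Gamma) \to H^2(\Omega)$ defined by $S(\lambda)\varphi = g_\lambda$ satisfies $S(\lambda) = S(\lambda_0) + (\lambda - \lambda_0)(A_D - \lambda)^{-1} S(\lambda_0)$ and is therefore holomorphic from $\rho(A_D)$ into $\cL(H^{3/2}(\Gamma), H^2(\Omega))$. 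Composing with the bounded operator $\gamma_N \colon H^2(\Omega) \to H^{1/2}(\Gamma)$ from Lemma~\ref{ljordanchain520}\ref{ljordanchain520-1} then yields the desired holomorphy of $\widehat D(\lambda) = \gamma_N \circ S(\lambda)$.

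The hard part is the $\cL(L_2, H^2)$-holomorphy of the resolvent, and this is where I expect to spend most of the effort. The standard argument goes as follows. For $u \in \dom A_D$ one has the identity $A_D(A_D-\lambda)^{-1} u = u + \lambda(A_D-\lambda)^{-1} u$, and since both terms on the right are holomorphic in $\cL(L_2(\Omega))$, the map $\lambda \mapsto (A_D-\lambda)^{-1}$ is holomorphic from $\rho(A_D)$ into $\cL(L_2(\Omega), \dom A_D)$, where $\dom A_D$ is equipped with the graph norm. By Lemma~\ref{ljordanchain520}\ref{ljordanchain520-3} (applied with $\varphi = 0$) every element of $\dom A_D$ lies in $H^2(\Omega)$, and the closed graph theorem then gives an elliptic regularity estimate $\|u\|_{H^2(\Omega)} \leq C(\|u\|_{L_2(\Omega)} + \|A_D u\|_{L_2(\Omega)})$ for $u \in \dom A_D$. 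This is exactly the statement that the inclusion $\dom A_D \hookrightarrow H^2(\Omega)$ is continuous, so composition with it preserves holomorphy and delivers the claim.

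Assembling the pieces, holomorphy of $\widehat D(\cdot)$ follows at once. Note that only the $L_2$-resolvent holomorphy and elliptic regularity (already invoked to prove Lemma~\ref{ljordanchain520}) enter in an essential way; the rest is bookkeeping with the continuous linear maps $\Tr$, $\gamma_N$ and $S(\lambda_0)$.
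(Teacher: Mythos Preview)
Your argument is correct, but it takes a different route from the paper. The paper does not redo the resolvent analysis at the $H^2$-level; instead it observes that for $\varphi \in H^{3/2}(\Gamma)$ and $\psi \in H^{1/2}(\Gamma)$ one has
\[
(\widehat D(\lambda)\varphi,\psi)_{L_2(\Gamma)}
= \langle D(\lambda)\varphi,\psi\rangle_{H^{-1/2}(\Gamma)\times H^{1/2}(\Gamma)},
\]
which is holomorphic in $\lambda$ by Lemma~\ref{ljordanchain201}\ref{ljordanchain201-2}. The functionals $F \mapsto (F\varphi,\psi)_{L_2(\Gamma)}$ form a separating subspace of $\cL(H^{3/2}(\Gamma),H^{1/2}(\Gamma))^*$ (because $H^{1/2}(\Gamma)$ is dense in $L_2(\Gamma)$), and the paper then invokes the weak-to-strong holomorphy criterion \cite{ABHN} Theorem~A.7 to conclude. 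Your approach is more hands-on: you upgrade $(A_D-\lambda)^{-1}$ to an $\cL(L_2(\Omega),H^2(\Omega))$-valued holomorphic map via the graph-norm identity and elliptic regularity, and then compose with the bounded maps $S(\lambda_0)$ and $\gamma_N$. This avoids the abstract holomorphy theorem and, as a bonus, yields holomorphy of the full solution operator $S(\lambda)$ in $\cL(H^{3/2}(\Gamma),H^2(\Omega))$; the paper's argument is shorter given that Lemma~\ref{ljordanchain201} is already in hand, but relies on an external reference.
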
 
\begin{proof}
For all $\varphi \in H^{3/2}(\Gamma)$ and $\psi \in H^{1/2}(\Gamma)$
define $\alpha_{\varphi,\psi} \colon \cL(H^{3/2}(\Gamma), H^{1/2}(\Gamma)) \to \Ci$
by 
\[
\alpha_{\varphi,\psi}(F) = (F \varphi, \psi)_{L_2(\Gamma)}
.  \]
Then $\alpha_{\varphi,\psi} \in \cL(H^{3/2}(\Gamma), H^{1/2}(\Gamma))^*$.
Let $W = \spann \{ \alpha_{\varphi,\psi} : 
      \varphi \in H^{3/2}(\Gamma) \mbox{ and } \psi \in H^{1/2}(\Gamma) \} $.
Since $H^{1/2}(\Gamma)$ is dense in $L_2(\Gamma)$, it follows that the 
space $W$ is separating, that is, if $F \in \cL(H^{3/2}(\Gamma), H^{1/2}(\Gamma))$
with $\alpha(F) = 0$
for all $\alpha \in W$, then it follows that $F = 0$.
If $\varphi \in H^{3/2}(\Gamma)$ and $\psi \in H^{1/2}(\Gamma)$, then 
\[
\alpha_{\varphi,\psi}(\widehat D(\lambda))
= (\widehat D(\lambda) \varphi, \psi)_{L_2(\Gamma)}
= \langle D(\lambda) \varphi, \psi \rangle_{H^{-1/2}(\Gamma)\times H^{1/2}(\Gamma)}
\]
for all $\lambda \in \rho(A_D)$.
Hence the map $\lambda \mapsto \alpha_{\varphi,\psi}(\widehat D(\lambda))$ is 
holomorphic for all $\varphi \in H^{3/2}(\Gamma)$ and $\psi \in H^{1/2}(\Gamma)$ by Lemma~\ref{ljordanchain201}\ref{ljordanchain201-2}.
Consequently the map $\lambda \mapsto \widehat D(\lambda)$ from $\rho(A_D)$ 
into $\cL(H^{3/2}(\Gamma), H^{1/2}(\Gamma))$ is holomorphic by 
\cite{ABHN} Theorem~A.7.
\end{proof}

The alluded variation of Theorem~\ref{main} is as follows.

\begin{theorem} \label{tjordanchain522}
Let $B \in \cL(H^{3/2}(\Gamma), H^{1/2}(\Gamma))$ and suppose there 
exists an $\eta > 0$ such that 
\begin{equation*}
\RRe ( B \varphi,\varphi)_{L_2(\Gamma)}
\leq \eta \|\varphi\|_{L_2(\Gamma)}^2
\end{equation*}
for all $\varphi\in H^{3/2}(\Gamma)$.
Let $A_B$ be the Robin realisation of $\cA$ in $L_2(\Omega)$ as 
in Proposition~\ref{ljordan2-313}, let $\lambda_0\in\rho(A_D)$ and consider the 
holomorphic function
\begin{equation} \label{etjordanchain522;1}
 \lambda \mapsto \widehat D(\lambda) - B 
\end{equation}
from $\rho(A_D)$ into $\cL(H^{3/2}(\Gamma),H^{1/2}(\Gamma))$.
Then the following holds.
\begin{tabel}
\item \label{tjordanchain522-1}
Let $f_0,\ldots,f_k \in H^2(\Omega)$.
Suppose that $\{f_0,\dots,f_k\}$ is a Jordan chain for $A_B$ at $\lambda_0$.
For all $m \in \{ 0,\ldots,k \} $ define $\varphi_m=\Tr f_m$.
Then $\{\varphi_0,\dots, \varphi_k\}$ is a Jordan chain
for the function \eqref{etjordanchain522;1} at $\lambda_0$.
\item \label{tjordanchain522-2}
Let $\{\varphi_0,\dots,\varphi_k\}$ be a Jordan chain for the function \eqref{etjordanchain522;1}
at $\lambda_0$.
Set $f_{-1} = 0$.
For all $m \in \{ 0,\ldots,k \} $ let 
$f_m\in H^2(\Omega)$ be the unique solution of the boundary value problem
\[
(\cA-\lambda_0)f_m=f_{m-1}
\quad\text{and}\quad
\Tr f_m=\varphi_m.
\]
Then $\{f_0,\dots,f_k\}$ is a Jordan chain for $A_B$ at $\lambda_0$.
\end{tabel}
\end{theorem}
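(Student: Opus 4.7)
The strategy is to reduce Theorem~\ref{tjordanchain522} to Theorem~\ref{main} by exploiting the fact that, under the sharper regularity hypotheses of this subsection, $\widehat D(\lambda)$ is nothing but $D(\lambda)$ restricted to the denser domain $H^{3/2}(\Gamma)$ with its image landing in the smaller target $H^{1/2}(\Gamma)$, which canonically embeds into $H^{-1/2}(\Gamma)$ via the $L_2(\Gamma)$ pairing. The proof is then an exercise in bookkeeping, together with the elliptic regularity packaged in Lemma~\ref{ljordanchain520}.

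The first step is to record that, for every $\lambda \in \rho(A_D)$ and every $\varphi \in H^{3/2}(\Gamma)$, the element $\widehat D(\lambda)\varphi \in H^{1/2}(\Gamma)$ coincides with $D(\lambda)\varphi \in H^{-1/2}(\Gamma)$ under this embedding. Indeed, the unique $H^2$-solution of $(\cA-\lambda) f = 0$ with $\Tr f = \varphi$ supplied by Lemma~\ref{ljordanchain520}\ref{ljordanchain520-2} is, by the uniqueness part of Lemma~\ref{sollem}\ref{sollem-1}, the same as the $H^1$-solution used in the definition of $D(\lambda)$, and both $\widehat D(\lambda)\varphi$ and $D(\lambda)\varphi$ equal $\gamma_N f$. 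Taking derivatives in $\lambda$ (both Dirichlet-to-Neumann families are holomorphic by Lemmas~\ref{ljordanchain201}\ref{ljordanchain201-2} and \ref{ljordanchain521}, so one may compare Taylor coefficients through the continuous embedding $H^{1/2}(\Gamma) \hookrightarrow H^{-1/2}(\Gamma)$) gives
\[
\widehat D^{(l)}(\lambda_0)\varphi = D^{(l)}(\lambda_0)\varphi
\quad\text{for all } \varphi \in H^{3/2}(\Gamma) \text{ and } l \in \dN_0.
\]

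Part~\ref{tjordanchain522-1} then follows immediately: if $\{f_0,\dots,f_k\} \subset H^2(\Omega)$ is a Jordan chain for $A_B$ at $\lambda_0$, Lemma~\ref{ljordanchain520}\ref{ljordanchain520-1} gives $\varphi_m = \Tr f_m \in H^{3/2}(\Gamma)$, and Theorem~\ref{main}\ref{main-1} supplies the identities $\sum_{l=0}^j \frac{1}{l!} D^{(l)}(\lambda_0)\varphi_{j-l} = B\varphi_j$ in $H^{-1/2}(\Gamma)$. Replacing $D^{(l)}$ by $\widehat D^{(l)}$ via the agreement above, and noting that $B\varphi_j \in H^{1/2}(\Gamma)$, the same identities hold as elements of $H^{1/2}(\Gamma)$, which is the required Jordan chain property for the function \eqref{etjordanchain522;1}. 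For part~\ref{tjordanchain522-2}, given a Jordan chain $\{\varphi_0,\dots,\varphi_k\} \subset H^{3/2}(\Gamma)$ for $\widehat D - B$ at $\lambda_0$, Lemma~\ref{ljordanchain520}\ref{ljordanchain520-3} produces unique $f_m \in H^2(\Omega)$ with $(\cA-\lambda_0) f_m = f_{m-1}$ and $\Tr f_m = \varphi_m$; by uniqueness in $H^1(\Omega)$ (Lemma~\ref{sollem}) these are exactly the solutions to which Theorem~\ref{main}\ref{main-2} applies. The Jordan chain identities, assumed in $H^{1/2}(\Gamma)$, transfer to $H^{-1/2}(\Gamma)$ through the embedding, so Theorem~\ref{main}\ref{main-2} delivers the Jordan chain property for $A_B$.

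The only delicate point is the consistent treatment of the operator $B$: implicit in the statement is that an element of $\cL(H^{3/2}(\Gamma), H^{1/2}(\Gamma))$ satisfying the sectoriality estimate on $H^{3/2}(\Gamma)$ is being viewed as an element of $\cL(H^{1/2}(\Gamma), H^{-1/2}(\Gamma))$ satisfying \eqref{bsemi}, so that Proposition~\ref{ljordan2-313} applies to define $A_B$. Once this identification is in place, the proof proceeds as sketched and no further essential difficulty arises; Theorem~\ref{tjordanchain522} is a regularity-enhanced corollary of Theorem~\ref{main}.
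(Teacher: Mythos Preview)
Your proof is correct, but it takes a different route from the paper's. The paper simply writes ``The proof is similar to the proof of Theorem~\ref{main}, with obvious changes,'' meaning one reruns the entire argument (Lemma~\ref{mainlem} included) in the $H^2(\Omega)$/$H^{3/2}(\Gamma)$/$H^{1/2}(\Gamma)$ setting, replacing the duality pairing $\langle\,\cdot\,,\,\cdot\,\rangle_{H^{-1/2}\times H^{1/2}}$ by the $L_2(\Gamma)$ inner product and invoking Lemma~\ref{ljordanchain520} in place of Lemma~\ref{sollem}. You instead \emph{reduce} to Theorem~\ref{main} by observing that $\widehat D(\lambda)$ is the restriction of $D(\lambda)$ to $H^{3/2}(\Gamma)$ viewed through the injective embedding $H^{1/2}(\Gamma)\hookrightarrow H^{-1/2}(\Gamma)$, and that this identification passes to all derivatives because the embedding is continuous and linear. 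Your route is more economical: it avoids rewriting the induction in Lemma~\ref{mainlem} and makes transparent that Theorem~\ref{tjordanchain522} contains no genuinely new analytic input beyond the elliptic regularity of Lemma~\ref{ljordanchain520}. The paper's route, on the other hand, is self-contained in the $H^{3/2}/H^{1/2}$ framework and does not rely on first interpreting $B$ as an element of $\cL(H^{1/2}(\Gamma),H^{-1/2}(\Gamma))$; you rightly flag that this interpretation is needed for your reduction (and indeed already to make literal sense of ``$A_B$ as in Proposition~\ref{ljordan2-313}''), and you are correct that once it is granted no further difficulty arises.
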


The proof is similar to the proof of Theorem~\ref{main}, with obvious changes.

\subsection{m-Sectorial operators} \label{Subjordanchain5.2}

Throughout this subsection we merely assume again that $\Omega$ 
is a Lipschitz domain, but we put conditions on the coefficients 
of the elliptic operator.
We assume that $c_{kl} = c_{lk} \in W^{1,\infty}(\Omega,\Ri)$ is real valued and 
$b_k = c_k = 0$ for all 
$k \in \{ 1,\ldots,d \} $.
We emphasise that $c_0$ can be complex valued and merely measurable.
An example is the Schr\"odinger operator with complex potential.
The Dirichlet-to-Neumann operator $D(\lambda) \colon H^{1/2}(\Gamma) \to H^{-1/2}(\Gamma)$
has been studied intensively in \cite{BeE1, BGHN, GesM2, GesM3}.
Let $\cD(\lambda)$ be the part of $D(\lambda)$ in $L_2(\Gamma)$.
So $\cD(\lambda) \subset D(\lambda)$ and if $\varphi \in L_2(\Gamma)$, 
then $\varphi \in \dom \cD(\lambda)$ if and only if 
$\varphi \in H^{1/2}(\Gamma)$ and $D(\lambda) \varphi \in L_2(\Gamma)$.
The operator $\cD(\lambda)$ can be represented by a form.

\begin{lemma} \label{ljordanchain530}
Let $\lambda \in \rho(A_D)$.
Let $\varphi,\psi \in L_2(\Gamma)$.
Then the following are equivalent.
\begin{tabeleq}
\item \label{ljordanchain530-1}
$\varphi \in \dom \cD(\lambda)$ and $\cD(\lambda) \varphi = \psi$.
\item \label{ljordanchain530-2}
There exists an $f \in H^1(\Omega)$ such that $\Tr f = \varphi$ and 
\[
\gota(f,g) - \lambda (f,g)_{L_2(\Omega)}
= (\psi,\Tr g)_{L_2(\Gamma)}
\]
for all $g \in H^1(\Omega)$.
\end{tabeleq}
\end{lemma}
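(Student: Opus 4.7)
The plan is to unwind the definitions of $\cD(\lambda)$, the Dirichlet-to-Neumann map $D(\lambda)$, and the co-normal derivative $\gamma_N$, and show that both conditions say the same thing about the unique $f \in H^1(\Omega)$ solving the homogeneous problem $(\cA-\lambda)f=0$ with $\Tr f = \varphi$.

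First I would handle the direction \ref{ljordanchain530-1}$\Rightarrow$\ref{ljordanchain530-2}. Assume $\varphi \in \dom\cD(\lambda)$ with $\cD(\lambda)\varphi=\psi$. By definition of the part in $L_2(\Gamma)$ we have $\varphi \in H^{1/2}(\Gamma)$ and $D(\lambda)\varphi = \psi \in L_2(\Gamma)$. By Lemma~\ref{sollem}\ref{sollem-1} there is a unique $f \in H^1(\Omega)$ with $(\cA-\lambda)f=0$ and $\Tr f = \varphi$. Then $\cA f = \lambda f \in L_2(\Omega)$, so by the definition of $\gamma_N f$ and of $D(\lambda)$,
\[
\gota(f,g) - \lambda(f,g)_{L_2(\Omega)}
= \langle \gamma_N f,\Tr g\rangle_{H^{-1/2}(\Gamma)\times H^{1/2}(\Gamma)}
= \langle D(\lambda)\varphi,\Tr g\rangle_{H^{-1/2}(\Gamma)\times H^{1/2}(\Gamma)}
\]
for all $g \in H^1(\Omega)$. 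Since $D(\lambda)\varphi = \psi \in L_2(\Gamma)$, the duality pairing reduces to the $L_2(\Gamma)$ inner product $(\psi,\Tr g)_{L_2(\Gamma)}$ (this is the extension of the $L_2(\Gamma)$ inner product mentioned at the beginning of Section~\ref{Sjordanchain2}).

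For the converse \ref{ljordanchain530-2}$\Rightarrow$\ref{ljordanchain530-1}, suppose such an $f$ exists. Then $\varphi = \Tr f \in H^{1/2}(\Gamma)$. Testing the identity against $g \in H^1_0(\Omega)$ (where $\Tr g=0$) gives $\gota(f,g) = \lambda(f,g)_{L_2(\Omega)}$, which means $\cA f = \lambda f \in L_2(\Omega)$ in the sense of $(H^1_0(\Omega))^*$; in particular the co-normal derivative $\gamma_N f \in H^{-1/2}(\Gamma)$ is defined. Substituting back, the given identity together with the defining relation of $\gamma_N f$ yields
\[
\langle \gamma_N f,\Tr g\rangle_{H^{-1/2}(\Gamma)\times H^{1/2}(\Gamma)}
= (\psi,\Tr g)_{L_2(\Gamma)}
= \langle \psi,\Tr g\rangle_{H^{-1/2}(\Gamma)\times H^{1/2}(\Gamma)}
\]
for all $g \in H^1(\Omega)$, where again we view $\psi \in L_2(\Gamma) \hookrightarrow H^{-1/2}(\Gamma)$. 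Since $\Tr\colon H^1(\Omega)\to H^{1/2}(\Gamma)$ is surjective, we conclude $\gamma_N f = \psi$ in $H^{-1/2}(\Gamma)$. Because $(\cA-\lambda)f=0$ and $\Tr f=\varphi$, the definition of the Dirichlet-to-Neumann map gives $D(\lambda)\varphi=\gamma_N f=\psi \in L_2(\Gamma)$, so $\varphi \in \dom\cD(\lambda)$ with $\cD(\lambda)\varphi=\psi$.

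The only mildly delicate point, and likely the place a careless proof could go wrong, is the consistent identification between the duality pairing $\langle\cdot,\cdot\rangle_{H^{-1/2}(\Gamma)\times H^{1/2}(\Gamma)}$ and the $L_2(\Gamma)$ inner product when one of the arguments actually lies in $L_2(\Gamma)$; once that identification is used uniformly, the proof is simply the combination of Lemma~\ref{sollem}\ref{sollem-1}, the definition of $\gamma_N$ from Lemma~\ref{ljordanchain310}, and the definition of $D(\lambda)$.
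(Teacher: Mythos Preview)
Your proof is correct and is precisely the routine unwinding of the definitions that the paper intends; the paper itself omits the argument entirely (``The easy proof is left to the reader''), so there is nothing to compare against beyond noting that your reasoning via Lemma~\ref{sollem}\ref{sollem-1}, the definition of $\gamma_N$, and the identification of the duality pairing with the $L_2(\Gamma)$ inner product is exactly the expected route.
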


The easy proof is left to the reader.

It seems that the domain of $\cD(\lambda)$ depends on $\lambda$.
This is not the case because of the restriction on the principal part
of the elliptic operator.
We collect the main properties of the operator $\cD(\lambda)$ in the 
next proposition.

\begin{proposition} \label{pjordanchain531}
\begin{tabel}
\item \label{pjordanchain531-1}
If $\lambda \in \rho(A_D)$, then the operator $\cD(\lambda)$ is m-sectorial.
\item \label{pjordanchain531-1.5}
If $\lambda \in \rho(A_D)$, then  $\dom \cD(\lambda) = H^1(\Omega)$.
\item \label{pjordanchain531-2}
The map $\lambda \mapsto \cD(\lambda)$ from $\rho(A_D)$ 
into $\cL(H^1(\Gamma), L_2(\Gamma))$ is holomorphic.
\end{tabel}
\end{proposition}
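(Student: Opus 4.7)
The plan is to treat the three statements in sequence: (i) by realising $\cD(\lambda)$ as the m-sectorial operator associated with a closed sectorial form on $L_2(\Gamma)$, (ii) via a Ne\v{c}as--Rellich type boundary regularity argument that critically exploits the symmetry and $W^{1,\infty}$-regularity of the principal coefficients, and (iii) by combining (ii) with a resolvent identity for $\cD(\lambda)$. Statement (ii) is where I expect the main technical obstacle to lie.

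For (i), fix $\lambda \in \rho(A_D)$ and for $\varphi \in H^{1/2}(\Gamma)$ let $f_\lambda^\varphi \in H^1(\Omega)$ denote the unique Poisson solution given by Lemma~\ref{sollem}\ref{sollem-1}. Consider the sesquilinear form
\[
\mathfrak{d}_\lambda(\varphi,\psi) := \gota(f_\lambda^\varphi, f_\lambda^\psi) - \lambda\,(f_\lambda^\varphi, f_\lambda^\psi)_{L_2(\Omega)}
\]
with form domain $H^{1/2}(\Gamma)$; Lemma~\ref{ljordanchain310} shows that $\mathfrak{d}_\lambda(\varphi,\psi) = \langle D(\lambda)\varphi,\psi\rangle_{H^{-1/2}(\Gamma) \times H^{1/2}(\Gamma)}$. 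Under the hypotheses $c_{kl}=c_{lk}\in\Ri$ and $b_k=c_k=0$ the matrix $(c_{kl})$ is hermitian, so $\sum_{k,l}c_{kl}(\partial_l f)\,\overline{\partial_k f}$ is pointwise real and nonnegative, whence $\IIm \mathfrak{d}_\lambda(\varphi,\varphi) = \int_\Omega (\IIm c_0-\IIm \lambda)\,|f_\lambda^\varphi|^2$ is dominated by $\|f_\lambda^\varphi\|_{L_2(\Omega)}^2$, while ellipticity yields $\RRe \mathfrak{d}_\lambda(\varphi,\varphi) \geq \mu\|\nabla f_\lambda^\varphi\|_{L_2(\Omega)}^2 - C\|f_\lambda^\varphi\|_{L_2(\Omega)}^2$. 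The Poisson map is bounded from $L_2(\Gamma)$ into $L_2(\Omega)$ (a classical fact on Lipschitz domains under our assumptions on the coefficients), and the resulting estimate $\|f_\lambda^\varphi\|_{L_2(\Omega)}^2 \leq C_\lambda \|\varphi\|_{L_2(\Gamma)}^2$ upgrades the preceding bounds to the $L_2(\Gamma)$-sectoriality inequality
\[
|\IIm \mathfrak{d}_\lambda(\varphi,\varphi)| \leq \tan\theta\,\bigl(\RRe \mathfrak{d}_\lambda(\varphi,\varphi) + \nu \|\varphi\|_{L_2(\Gamma)}^2\bigr)
\]
and gives closedness in the form norm. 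That the associated m-sectorial operator coincides with $\cD(\lambda)$ is then immediate from Lemma~\ref{ljordanchain530}.

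Statement (ii) amounts, after unpacking the definitions, to the two-sided regularity equivalence
\[
\Tr f \in H^1(\Gamma) \ \Longleftrightarrow\ \gamma_N f \in L_2(\Gamma)
\]
for $f \in H^1(\Omega)$ with $\cA f \in L_2(\Omega)$, together with quantitative bounds in both directions. This is a classical Ne\v{c}as--Rellich boundary regularity statement whose proof rests on Rellich's identity applied with a globally Lipschitz vector field transversal to $\Gamma$. It is precisely here that the symmetry $c_{kl}=c_{lk}$ and the $W^{1,\infty}$-regularity of the principal coefficients are indispensable, the complex bounded zeroth-order term $c_0$ entering only as a harmless lower-order perturbation. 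I would extract the equivalence from \cite{BeE1,BGHN,GesM2,GesM3} cited just above the proposition, the technical heart being the verification that the Rellich identity accommodates our divergence-form operator with complex potential.

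For (iii), fix $\zeta \in \rho(A_D)$ and recall the factorisation $f_\lambda^\varphi = \bigl(I+(\lambda-\zeta)(A_D-\lambda)^{-1}\bigr)f_\zeta^\varphi$ from \eqref{gres}, which yields the resolvent identity
\[
\cD(\lambda)\varphi - \cD(\zeta)\varphi = (\lambda-\zeta)\,\gamma_N\bigl[(A_D - \lambda)^{-1} f_\zeta^\varphi\bigr].
\]
Since $(A_D - \lambda)^{-1} f_\zeta^\varphi \in H^1_0(\Omega)$ has vanishing trace, its conormal derivative lies in $L_2(\Gamma)$ by (ii), with the bound $\|\gamma_N g\|_{L_2(\Gamma)} \leq C(\|g\|_{H^1(\Omega)} + \|\cA g\|_{L_2(\Omega)})$ uniform in $\lambda$. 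Combined with the holomorphy of $\lambda \mapsto (A_D-\lambda)^{-1}$ into $\dom A_D$ equipped with the graph norm and with the boundedness of the Poisson map $P_\zeta\colon H^1(\Gamma)\to H^1(\Omega)$, the difference quotient $\tfrac{1}{\lambda-\zeta}\bigl(\cD(\lambda)-\cD(\zeta)\bigr) = \gamma_N\,(A_D-\lambda)^{-1} P_\zeta$ converges in $\cL(H^1(\Gamma),L_2(\Gamma))$ as $\lambda\to\zeta$ to $\gamma_N\,(A_D-\zeta)^{-1} P_\zeta$, showing that $\lambda\mapsto \cD(\lambda)$ is complex differentiable at $\zeta$ and therefore holomorphic on $\rho(A_D)$.
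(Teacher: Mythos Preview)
Your argument is correct. For parts \ref{pjordanchain531-1} and \ref{pjordanchain531-1.5} you essentially reconstruct what the paper obtains by direct citation: the paper simply invokes \cite{Ouh8} Corollary~2.3 for m-sectoriality and \cite{McL} Theorem~4.24 for the two-sided regularity equivalence $\Tr f\in H^1(\Gamma)\Leftrightarrow\gamma_N f\in L_2(\Gamma)$, whereas you unpack the underlying form construction and Rellich-identity mechanism. Your treatment of \ref{pjordanchain531-1} hinges on the $L_2(\Gamma)\to L_2(\Omega)$ boundedness of the Poisson operator, which you label ``classical''; this is correct under the present hypotheses but is not entirely soft, so a pointer (e.g.\ to \cite{GesM2,GesM3} or the layer-potential literature) would be appropriate---this is precisely the content being outsourced when the paper cites \cite{Ouh8}.

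The genuinely different step is \ref{pjordanchain531-2}. The paper argues indirectly: it tests $\cD(\lambda)$ against the separating family $\alpha_{\varphi,\psi}(F)=(F\varphi,\psi)_{L_2(\Gamma)}$ for $\varphi\in H^1(\Gamma)$, $\psi\in H^{1/2}(\Gamma)$, observes that $\alpha_{\varphi,\psi}(\cD(\lambda))=\langle D(\lambda)\varphi,\psi\rangle$ is holomorphic by Lemma~\ref{ljordanchain201}\ref{ljordanchain201-2}, and then invokes the weak-to-strong holomorphy principle \cite{ABHN} Theorem~A.7. Your route is more direct: from the identity $\cD(\lambda)=\cD(\zeta)+(\lambda-\zeta)\,\gamma_N(A_D-\lambda)^{-1}P_\zeta$ you read off holomorphy in $\cL(H^1(\Gamma),L_2(\Gamma))$ immediately, using that $\gamma_N\colon\dom A_D\to L_2(\Gamma)$ is bounded (a consequence of \ref{pjordanchain531-1.5}) and that the resolvent is holomorphic into $\dom A_D$ with the graph norm. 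Your argument is self-contained and avoids the abstract vector-valued holomorphy theorem, at the cost of depending on \ref{pjordanchain531-1.5}; the paper's argument is shorter and reuses the template of Lemma~\ref{ljordanchain521}, but imports an external result.
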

\begin{proof}
`\ref{pjordanchain531-1}'.
See \cite{Ouh8} Corollary~2.3.

`\ref{pjordanchain531-1.5}'.
`$\subset$'.
Let $\varphi \in \dom \cD(\lambda)$.
Then there exists an $f \in H^1(\Omega)$ such that 
$\varphi = \Tr f$ and $(\cA - \lambda)f = 0$.
So $\cA f = \lambda f \in L_2(\Omega)$ and 
$\gamma_N f = \cD(\lambda) \varphi \in L_2(\Gamma)$.
Therefore \cite{McL} Theorem~4.24(ii) implies that 
$\varphi = \Tr f \in H^1(\Gamma)$.

`$\supset$'.
Let $\varphi \in H^1(\Gamma)$.
By Lemma~\ref{sollem}\ref{sollem-1} there exists a unique $f \in H^1(\Omega)$ 
such that $(\cA - \lambda)f = 0$ and $\Tr f = \varphi$.
Then $\cA f = \lambda f \in L_2(\Omega)$.
Hence \cite{McL} Theorem~4.24(i) gives $\gamma_N f \in L_2(\Gamma)$.
So $\varphi \in \dom \cD(\lambda)$.

`\ref{pjordanchain531-2}'.
For all $\varphi \in H^1(\Gamma)$ and $\psi \in H^{1/2}(\Gamma)$
define $\alpha_{\varphi,\psi} \colon \cL(H^1(\Gamma), L_2(\Gamma)) \to \Ci$
by 
\[
\alpha_{\varphi,\psi}(F) = (F \varphi, \psi)_{L_2(\Gamma)}
.  \]
Then argue as in the proof of Lemma~\ref{ljordanchain521}.
\end{proof}

Now we are able to formulate another version of Theorem~\ref{main}.

\begin{theorem} \label{tjordanchain534}
Let $B \in \cL(H^1(\Gamma), L_2(\Gamma))$ and suppose that there exists an
$\eta > 0$ such that 
\begin{equation*}
\RRe ( B \varphi,\varphi)_{L_2(\Gamma)}
\leq \eta \|\varphi\|_{L_2(\Gamma)}^2
\end{equation*}
for all $\varphi\in H^1(\Gamma)$.
Let $A_B$ be the Robin realisation of $\cA$ in $L_2(\Omega)$ as 
in Proposition~\ref{ljordan2-313}, let $\lambda_0\in\rho(A_D)$ and consider the 
holomorphic function
\[
 \lambda \mapsto \widehat D(\lambda) - B 
\]
from $\rho(A_D)$ into $\cL(H^1(\Omega),L_2(\Gamma))$.
Then the following holds.
\begin{tabel}
\item \label{tjordanchain534-1}
Let $\{f_0,\dots,f_k\}$ be a Jordan chain for $A_B$ at $\lambda_0$.
For all $m \in \{ 0,\ldots,k \} $ define $\varphi_m=\Tr f_m$.
Then $\{\varphi_0,\dots, \varphi_k\}$ is a Jordan chain
for the function \eqref{etjordanchain522;1} at $\lambda_0$.
\item \label{tjordanchain534-2}
Let $\{\varphi_0,\dots,\varphi_k\}$ be a Jordan chain for the function \eqref{etjordanchain522;1}
at $\lambda_0$.
Set $f_{-1} = 0$.
For all $m \in \{ 0,\ldots,k \} $ let 
$f_m\in H^1(\Omega)$ be the unique solution of the boundary value problem
\[
(\cA-\lambda_0)f_m=f_{m-1}
\quad\text{and}\quad
\Tr f_m=\varphi_m.
\]
Then $\{f_0,\dots,f_k\}$ is a Jordan chain for $A_B$ at $\lambda_0$.
\end{tabel}
\end{theorem}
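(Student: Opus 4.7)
The plan is to mirror the strategy of Theorem~\ref{main}, adapted to the pair of spaces $(H^1(\Gamma), L_2(\Gamma))$ in place of $(H^{1/2}(\Gamma), H^{-1/2}(\Gamma))$. The starting point is Proposition~\ref{pjordanchain531}: under the coefficient assumptions of this subsection, $\cD(\lambda)$ has domain exactly $H^1(\Gamma)$, belongs to $\cL(H^1(\Gamma), L_2(\Gamma))$, and depends holomorphically on $\lambda \in \rho(A_D)$, so its derivatives $\cD^{(l)}(\lambda_0)$ also lie in $\cL(H^1(\Gamma), L_2(\Gamma))$. The bridge to Theorem~\ref{main} is the compatibility $\cD(\lambda)\varphi = D(\lambda)\varphi$ for every $\varphi \in H^1(\Gamma)$, where the left-hand side is interpreted in $L_2(\Gamma)$ and the right-hand side in $H^{-1/2}(\Gamma)$ via the continuous embedding $L_2(\Gamma) \hookrightarrow H^{-1/2}(\Gamma)$. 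Differentiating both holomorphic maps at $\lambda_0$ yields $\cD^{(l)}(\lambda_0)\varphi = D^{(l)}(\lambda_0)\varphi$ for all $\varphi \in H^1(\Gamma)$ and $l \in \dN_0$. Consequently, for $\varphi_0, \ldots, \varphi_k \in H^1(\Gamma)$ with $B\varphi_j \in L_2(\Gamma)$, the Jordan-chain identity for $\lambda \mapsto \cD(\lambda) - B$ in $\cL(H^1(\Gamma), L_2(\Gamma))$ is equivalent to the corresponding identity for $\lambda \mapsto D(\lambda) - B$ in $\cL(H^{1/2}(\Gamma), H^{-1/2}(\Gamma))$.

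For part~\ref{tjordanchain534-1}, given a Jordan chain $\{f_0,\ldots,f_k\}$ for $A_B$ at $\lambda_0$, each $f_j \in \dom A_B$ satisfies $\cA f_j \in L_2(\Omega)$ and $\gamma_N f_j = B\Tr f_j \in L_2(\Gamma)$. The argument used for Proposition~\ref{pjordanchain531}\ref{pjordanchain531-1.5} (invoking \cite{McL} Theorem~4.24(ii)) then forces $\varphi_j = \Tr f_j \in H^1(\Gamma)$. Theorem~\ref{main}\ref{main-1} produces a Jordan chain for $\lambda \mapsto D(\lambda) - B$ in $\cL(H^{1/2}(\Gamma), H^{-1/2}(\Gamma))$, and the equivalence above transports it to a Jordan chain for $\lambda \mapsto \cD(\lambda) - B$ in $\cL(H^1(\Gamma), L_2(\Gamma))$. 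For part~\ref{tjordanchain534-2}, the given vectors $\varphi_m \in H^1(\Gamma) \subset H^{1/2}(\Gamma)$ form a Jordan chain for $\lambda \mapsto D(\lambda) - B$ in $\cL(H^{1/2}(\Gamma), H^{-1/2}(\Gamma))$ by the same equivalence, and Theorem~\ref{main}\ref{main-2} furnishes a Jordan chain $\{f_0,\ldots,f_k\}$ for $A_B$ whose traces recover $\{\varphi_0,\ldots,\varphi_k\}$; the uniqueness clause in Lemma~\ref{sollem}\ref{sollem-2} identifies these with the $f_m$'s defined in the statement.

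The principal obstacle is mostly administrative: one has to ensure that the Robin realisation $A_B$ referred to in Proposition~\ref{ljordan2-313} is well-defined when $B$ is only assumed to lie in $\cL(H^1(\Gamma), L_2(\Gamma))$. The cleanest way is to reinterpret $A_B$ here as the operator with domain $\{f \in H^1(\Omega) : \cA f \in L_2(\Omega),\ \Tr f \in H^1(\Gamma),\ \gamma_N f = B\Tr f\}$, and to reprove m-sectoriality via the form argument of Proposition~\ref{ljordan2-313} using the sesquilinear form $\gota_B(f,g) = \gota(f,g) - (B\Tr f,\Tr g)_{L_2(\Gamma)}$ on the set of $f,g \in H^1(\Omega)$ with $\Tr f, \Tr g \in H^1(\Gamma)$ (the sectoriality estimate uses the assumption on $\RRe(B\varphi,\varphi)_{L_2(\Gamma)}$ together with Ehrling's lemma, exactly as in Proposition~\ref{ljordan2-313}). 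Beyond this bookkeeping, no new computations are required: the analytic substance is already contained in Theorem~\ref{main} and Lemma~\ref{mainlem}.
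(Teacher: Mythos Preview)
Your strategy—mirror the argument of Theorem~\ref{main} in the $(H^1(\Gamma),L_2(\Gamma))$ setting—is exactly what the paper does; its own proof is just ``similar to the proof of Theorem~\ref{main}, with obvious changes.'' The additional details you supply (compatibility $\cD^{(l)}(\lambda_0)\varphi=D^{(l)}(\lambda_0)\varphi$ for $\varphi\in H^1(\Gamma)$, and the upgrade $\Tr f_j\in H^1(\Gamma)$ via \cite{McL}~Theorem~4.24) are the right ingredients.

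There is, however, a gap in the shortcut you take. You invoke Theorem~\ref{main} and Lemma~\ref{mainlem} as black boxes, but both are stated for $B\in\cL(H^{1/2}(\Gamma),H^{-1/2}(\Gamma))$ satisfying~\eqref{bsemi}, and an operator $B\in\cL(H^1(\Gamma),L_2(\Gamma))$ need not extend to that larger pair of spaces. So the sentences ``Theorem~\ref{main}\ref{main-1} produces \dots'' and ``Theorem~\ref{main}\ref{main-2} furnishes \dots'' are not literally justified. Your final paragraph spots this and proposes to rebuild $A_B$ from the form $\gota_B$ on $\{f\in H^1(\Omega):\Tr f\in H^1(\Gamma)\}$, but that form is in general \emph{not closed}: the ellipticity estimate you derive controls only $\|f\|_{H^1(\Omega)}$, not $\|\Tr f\|_{H^1(\Gamma)}$, so a form-Cauchy sequence can converge in $H^1(\Omega)$ to a limit whose trace lies in $H^{1/2}(\Gamma)\setminus H^1(\Gamma)$. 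The honest fix—and what the paper's ``obvious changes'' really means—is to rerun the \emph{proofs} of Lemma~\ref{mainlem} and Theorem~\ref{main} line by line with the $H^{-1/2}\times H^{1/2}$ pairing replaced by $(\,\cdot\,,\,\cdot\,)_{L_2(\Gamma)}$; those computations use only $\gamma_N f_j=B\Tr f_j$, Green's identity~\eqref{green2}, and Lemma~\ref{sollem}, none of which require $B$ to act on all of $H^{1/2}(\Gamma)$.
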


The proof is similar to the proof of Theorem~\ref{main}, with obvious changes.

\end{document}